\documentclass[12pt, reqno]{amsart}

\usepackage{mathtools}
\mathtoolsset{showonlyrefs}
\numberwithin{equation}{section}

\usepackage{graphicx}
\usepackage{hyperref}

\usepackage{mathtools}
\mathtoolsset{showonlyrefs}

\usepackage{amssymb}
\usepackage{amsthm}
\usepackage{amscd}
\usepackage{amsmath}
\usepackage{epic,eepic}
\usepackage{mathrsfs}
\usepackage{latexsym}
\usepackage{pifont}
\usepackage[all]{xy}
\usepackage {cancel}

\theoremstyle{plain}
\newtheorem{lemma}{Lemma}[section]
\newtheorem{prop}[lemma]{Proposition}
\newtheorem{thm}[lemma]{Theorem}
\newtheorem{cor}[lemma]{Corollary}
\newtheorem{intthm}{Theorem}

\theoremstyle{definition}

\newtheorem{rem}[lemma]{Remark}
\newtheorem{defi}[lemma]{Definition}
\newtheorem{exa}[lemma]{Example}

\newtheorem{problem}{Problem}

\newcommand{\bde}{\begin{defi}}
\newcommand{\ede}{\end{defi}\vspace{1mm}}
\newcommand{\ble}{\begin{lemma}}
\newcommand{\ele}{\end{lemma}}
\newcommand{\bpr}{\begin{prop}}
\newcommand{\epr}{\end{prop}}
\newcommand{\bt}{\begin{thm}}
\newcommand{\et}{\end{thm}}
\newcommand{\bco}{\begin{cor}}
\newcommand{\eco}{\end{cor}}
\newcommand{\bre}{\begin{rem}}
\newcommand{\ere}{\end{rem}}
\newcommand{\bex}{\begin{exa}}
\newcommand{\eex}{\end{exa}}
\newcommand{\bpf}{\begin{proof}}
\newcommand{\epf}{\end{proof}}

\newcommand{\mcA}{\mathcal{A}}
\newcommand{\mcB}{\mathcal{B}}
\newcommand{\mcC}{\mathcal{C}}
\newcommand{\mcD}{\mathcal{D}}
\newcommand{\mcE}{\mathcal{E}}
\newcommand{\mcF}{\mathcal{F}}
\newcommand{\mcG}{\mathcal{G}}
\newcommand{\mcH}{\mathcal{H}}

\newcommand{\mcL}{\mathcal{L}}
\newcommand{\mcM}{\mathcal{M}}
\newcommand{\mcN}{\mathcal{N}}
\newcommand{\mcO}{\mathcal{O}}

\newcommand{\mcS}{\mathcal{S}}
\newcommand{\mcT}{\mathcal{T}}

\newcommand{\mcW}{\mathcal{W}}

\newcommand{\mbC}{\mathbb{C}}

\newcommand{\mbF}{\mathbb{F}}
\newcommand{\mbG}{\mathbb{G}}

\newcommand{\mbY}{\mathbb{Y}}
\newcommand{\mbZ}{\mathbb{Z}}

\newcommand{\mfS}{\mathfrak{S}}

\newcommand{\mfb}{\mathfrak{b}}

\newcommand{\mfg}{\mathfrak{g}}
\newcommand{\mfh}{\mathfrak{h}}

\newcommand{\mfl}{\mathfrak{l}}

\newcommand{\mfo}{\mathfrak{o}}
\newcommand{\mfp}{\mathfrak{p}}

\newcommand{\mfs}{\mathfrak{s}}
\newcommand{\mft}{\mathfrak{t}}

\newcommand{\msA}{\mathscr{A}}

\newcommand{\msC}{\mathscr{C}}

\newcommand{\msE}{\mathscr{E}}
\newcommand{\msF}{\mathscr{F}}

\newcommand{\msL}{\mathscr{L}}

\newcommand{\msN}{\mathscr{N}}

\newcommand{\msT}{\mathscr{T}}

\newcommand{\msX}{\mathscr{X}}

\newcommand{\Dual}{\rotatebox[origin=c]{180}{$D$}\hspace{-0.5mm}}

\newcommand{\X}{{}}

\newcommand{\BB}{\blacktriangledown}
\newcommand{\BBB}{\blacktriangle}
\newcommand{\BBBB}{\triangledown}

\newcommand{\ang}{\sphericalangle}
\newcommand{\SSP}{\vspace{3mm}}

\newcommand{\mr}{\mathrm}
\newcommand{\bb}{\vartheta}
\newcommand{\Fus}{\rotatebox[origin=c]{180}{$\mbY$}}

\begin{document}

\title[Duality for dormant opers of classical types B and  C]{Duality for dormant opers of classical types B and  C}
\author{Yasuhiro Wakabayashi}
\date{\today}
\markboth{}{}
\maketitle
\footnotetext{Y. Wakabayashi: 
Graduate School of Information Science and Technology, The University of Osaka, Suita, Osaka 565-0871, Japan;}
\footnotetext{e-mail: {\tt wakabayashi@ist.osaka-u.ac.jp};}
\footnotetext{2020 {\it Mathematical Subject Classification}: Primary 14H60, Secondary 14G17;}
\footnotetext{Key words: oper, duality, positive characteristic, moduli space, connection, algebraic curve}
\begin{abstract} 
A $\mathfrak{g}$-oper for a  simple Lie algebra $\mathfrak{g}$ is a specific type of flat principal bundle on an algebraic curve.
When the base field is of prime characteristic $p$, those with vanishing $p$-curvature are  called dormant $\mathfrak{g}$-opers, and they  form finite and geometrically meaningful moduli spaces. In earlier work, a canonical duality was  established  between dormant $\mathfrak{sl}_n$-opers  and dormant $\mathfrak{sl}_{p-n}$-opers. This duality has  provided effective tools for the study of higher-rank cases, as well as for the computation and structural understanding of the associated enumerative invariants. The main result of this paper extends this duality phenomenon to classical Lie algebras of type B and C. More precisely, under the numerical condition $p-1 = 2 (\ell +m)$, we construct a canonical isomorphism between the moduli spaces of dormant $\mathfrak{so}_{2\ell +1}$-opers and dormant  and $\mathfrak{sp}_{2m}$-opers with prescribed symmetric radii.

\end{abstract}
\vspace{10mm}
\tableofcontents 

\section{Introduction} \label{S0}

One of the fundamental invariants for  linear differential equations, and more generally flat bundles,
in prime characteristic $p>0$ is  {\it $p$-curvature}.
Roughly speaking, the $p$-curvature measures the failure of such a structure  to be compatible with $p$-power operations 
 on certain associated spaces of infinitesimal symmetries.
It is well-known  that 
a linear  homogeneous differential equations has vanishing $p$-curvature if and only if its  space of  solutions attains the  maximal possible rank.

Flat bundles with vanishing $p$-curvature
have played a central role in several areas of arithmetic and algebraic geometry.
Most notably,
 they appear in the study for the Grothendieck-Katz conjecture, which predicts a deep relationship between the vanishing of 
$p$-curvature for almost all primes and the algebraicity of solutions of complex differential equations  (cf. ~\cite{Kat2}, ~\cite{And}).
 Moreover, characteristic-$p$
analogues of non-abelian Hodge theory and of the geometric Langlands correspondence have been developed via a variant  of  the Hitchin morphism incorporating   
$p$-curvature.
This has led  to a rich theory connecting flat bundles and Higgs bundles through the Cartier transform (cf.  ~\cite{BeBr}, ~\cite{ChZh1}, ~\cite{ChZh2}, ~\cite{GLQ}, ~\cite{LSZ}, ~\cite{OgVo}, and ~\cite{She}).

In this paper, we focus on a particular class of flat bundles, namely
 {\it dormant $\mfg$-opers} for a simple Lie algebra $\mfg$  (cf. Section \ref{SS02} or  ~\cite[Definitions 2.1 and  3.15]{Wak5} for the definition of a dormant $\mfg$-oper).
 Dormant $\mfg$-opers
   may be viewed as natural generalizations of  linear  homogeneous ordinary differential operators with unit principal symbol and
   vanishing $p$-curvature.
 
 Hereinafter, suppose that $p$ does not divide the order of the Weyl group of $\mfg$. 
 Fix  a pair of nonnegative integers $(g, r)$ satisfying  $2g-2 +r > 0$, and let
  $\overline{\mcM}_{g, r}$ denote  the moduli stack of $r$-pointed stable curves of genus $g$  over a fixed algebraically closed field $k$ of  characteristic $p$.
Given a  collection of certain additional data $\rho := (\rho_i)_{i=1}^r$ specified   at the marked points, 
one can consider  the moduli stack
\begin{align} \label{eeRt2}
\mcO p_{\mfg, \rho, g, r}^{^\mr{Zzz...}}
\end{align}
(cf. ~\cite[Eq.\,(433)]{Wak5}) that
 parametrizes   pairs $(\msX, \msE^\spadesuit)$ consisting of a pointed stable curve   $\msX$ in $\overline{\mcM}_{g, r}$  and  a dormant $\mfg$-oper $\msE^\spadesuit$ of radii $\rho$ on it (cf. Sections \ref{SS600}-\ref{SS6002} or ~\cite[Definition 2.29]{Wak5} for the definition of radius).
  For $\mfg = \mfs \mfl_2$, this stack was originally introduced in the context of $p$-adic Teichm\"{u}ller theory (cf. ~\cite{Moc2}), where   
 dormant $\mfs \mfl_2$-opers (or more generally, certain $\mfs \mfl_2$-opers with nilpotent $p$-curvature)
play a role  analogous  to  ``nice" projective structures on Riemann surfaces, such as those  arising   from uniformization.

 \hspace{5mm} 
 \includegraphics[width=16cm,bb=0 0 950 180,clip]{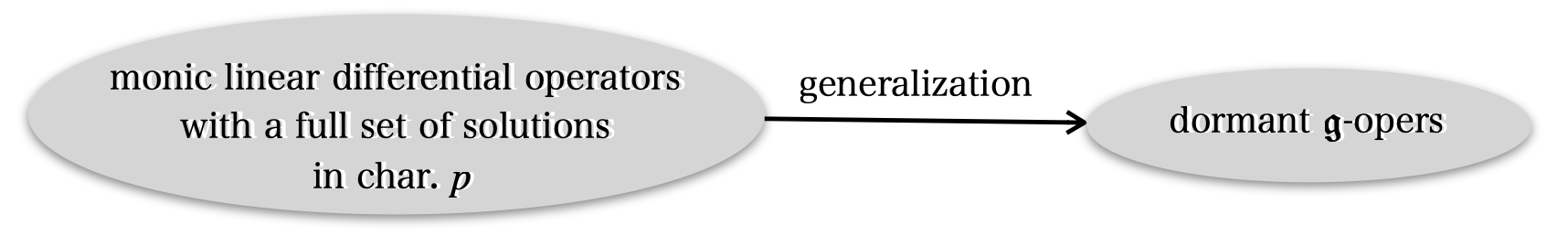}

\vspace{5mm}
 
 For a general simple Lie algebra $\mfg$ satisfying certain assumptions,
it is known  that $\mcO p^{^\mr{Zzz...}}_{\mfg, \rho, g, r}$
 is a  (possibly empty) proper Deligne-Mumford  stack, and that 
 the projection $\Pi_{\mfg, \rho, g, r} : \mcO p^{^\mr{Zzz...}}_{\mfg, \rho, g, r}\rightarrow \overline{\mcM}_{g, r},$ given by $(\msX, \msE^\spadesuit)\mapsto \msX$ is finite
 (cf. ~\cite[Theorem C]{Wak5}).
(Under some restricted situations, the finiteness  has been established earlier in ~\cite[Chap.\,II, Theorem 2.8]{Moc2},  ~\cite[Corollary 6.1.6]{JP}, and ~\cite[Theorem A.1]{BeTr}.)
Furthermore, in many cases, the morphism $\Pi_{\mfg, \rho, g, r}$ is {\it generically \'{e}tale}  (cf. ~\cite[Theorem G]{Wak5}, ~\cite[Theorem A]{Wak10}).

The 
generic \'{e}taleness is of crucial importance from the  viewpoint  of enumerative geometry;
it allows one to interpret the generic degree  $\mr{deg} (\Pi_{\mfg, \rho, g, r})$ of $\Pi_{\mfg, \rho, g, r}$ as well-defined enumerative invariants, 
and to describe its behavior under degeneration of underlying curves by means of a kind of  {\it fusion rule}
 or a {\it two-dimensional topological quantum field theory ($2$d TQFT)}.
In the case $\mfg = \mfs \mfl_n$, 
such a  description leads to explicit formulas for the number of dormant opers.
These formulas  can be compared with Verlinde-type formulas arising from the conformal field theory of affine Lie algebras,  with Gromov-Witten invariants of Grassmannians, and with special values of the Ehrhart quasi-polynomials associated with rational polytopes.
However, these approaches are effective mainly when the rank $n$ is relatively small compared  to the characteristic $p$. 
For further details, see ~\cite{Jo14}, ~\cite{JP}, ~\cite{LiOs}, ~\cite{Moc2}, ~\cite{Wak1},  ~\cite{Wak3}, ~\cite{Wak5}, and ~\cite{Wak20}.

A key breakthrough for higher-rank cases was the discovery of a {\it duality} between dormant 
 $\mfs \mfl_n$-opers and dormant $\mfs \mfl_{p-n}$-opers.
 According to ~\cite[Theorem A]{Wak2}, 
this duality yields, for each $r$-tuple $\rho := (\rho_i)_{i=1}^r$  of elements in  a certain finite set $\Xi_{\mfs \mfl_n}$ (cf. \eqref{EQ300}),  an isomorphism
\begin{align} \label{EQ448}
\Dual_{n, \rho, g, r} : \mcO p^{^\mr{Zzz...}}_{\mfs \mfl_n, \rho, g, r} \xrightarrow{\sim} \mcO p^{^\mr{Zzz...}}_{\mfs \mfl_{p-n}, \rho^\BBBB, g, r}
\end{align}
(cf. \eqref{EQ604})  between the corresponding moduli stacks over  $\overline{\mcM}_{g, r}$, which satisfies the equality $\Dual_{p-n, \rho^\BBBB, g, r} \circ \Dual_{n, \rho, g, r} = \mr{id}$.
 Here,  $\rho^\BBBB$ denotes  the image of $\rho$ under a certain bijection  $(-)^\BBBB : \Xi_{\mfs \mfl_n} \xrightarrow{\sim} \Xi_{\mfs \mfl_{p-n}}$   (cf. \eqref{EQ304}).
 In particular, this duality  allows one to deduce results for large rank from those for smaller rank.
As a consequence, the associated 
fusion rules
 in  higher-rank cases can be described explicitly in terms of lower-rank data.

The purpose of the present paper is to establish an analogue of this duality for {\it classical Lie algebras of types B and C}.
More precisely, we prove a canonical duality between dormant 
$\mfs \mfo_{2\ell +1}$-opers and dormant 
$\mfs \mfo_{2m}$-opers under the numerical condition 
$p-1 =2 (\ell + m)$.
It  may be regarded as a natural extension of the {\it rank-level duality} phenomenon for 
$\mfs \mfl_n$, reflecting a  symmetry between orthogonal and symplectic structures in characteristic $p$;
 the Lie algebra is transformed  into its {\it Langlands dual} with its rank reversed at $p$ in a certain sense.

Our main result can be summarized as follows.

\begin{intthm}[cf. Theorem \ref{QW1199} for the full statement]\label{ThmA}
\begin{itemize}
\item[(i)]
Let  $\ell$, $m$ be positive integers with $p -1 = 2 (\ell + m)$, and let
 $\rho := (\rho_i)_{i=1}^r$ be an $r$-tuple of elements in $\Xi_{\mfs \mfo_{2\ell +1}}$ (cf. \eqref{EQ300}),
 where we set $\rho := \emptyset$ if  $r = 0$.
Then,  there exists 
a duality isomorphism 
 \begin{align} \label{EQ40}
 \Dual_{2\ell +1, \rho, g, r}^\ang : \mcO p_{\mfs \mfo_{2\ell +1} \rho, g, r}^{^\mr{Zzz...}} \xrightarrow{\sim} 
 \mcO p_{\mfs \mfp_{2m}, \rho^\BBBB, g, r}^{^\mr{Zzz...}}
 \end{align}
over $\overline{\mcM}_{g, r}$ (cf. \eqref{EQ28e} for the definition of $\rho^\BBBB$).
 \item[(ii)]
 The stack $\mcO p_{\mfs \mfp_{p-1}, \rho, g, r}^{^\mr{Zzz...}}$ is empty unless $\rho = \rho_\mr{full}^r : = (\rho_\mr{full}, \cdots, \rho_\mr{full})$ (cf. \eqref{EQ301} for the definition of $\rho_\mr{full}$).
 In this exceptional case, the projection $\Pi_{\mfs \mfp_{p-1}, \rho^r_\mr{full}, g, r}$ defines an isomorphism 
 \begin{align}
  \mcO p_{\mfs \mfp_{p-1}, \rho^r_\mr{full}, g, r}^{^\mr{Zzz...}} \xrightarrow{\sim} \overline{\mcM}_{g, r}.
 \end{align}
 \end{itemize}
 \end{intthm}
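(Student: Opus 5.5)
The plan is to reduce everything to the known $\mfs\mfl_n$-duality $\Dual_{n,\rho,g,r}$ of \cite[Theorem A]{Wak2}. With $n = 2\ell+1$ we have $p-n = 2m$, and we show that this duality restricts to self-dual (orthogonal, resp. symplectic) objects.

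First, realize dormant $\mfs\mfo_{2\ell+1}$-opers as dormant $\mfs\mfl_{2\ell+1}$-opers, i.e. as $\mr{GL}_{2\ell+1}$-opers $(\mcF, \nabla, \{\mcF^j\}_j)$ up to the usual equivalence. These carry a horizontal nondegenerate symmetric pairing $\mcF\otimes\mcF\to\mcO$ (suitably twisted by a line bundle) for which the Hodge filtration is isotropic in the sense $(\mcF^{j})^\perp=\mcF^{n-j}$. Dormant $\mfs\mfp_{2m}$-opers are described the same way with an alternating form. Since $p$ does not divide the orders of the Weyl groups, the natural morphisms $\mcO p^{\mr{Zzz...}}_{\mfs\mfo_{2\ell+1}}\to\mcO p^{\mr{Zzz...}}_{\mfs\mfl_{2\ell+1}}$ and $\mcO p^{\mr{Zzz...}}_{\mfs\mfp_{2m}}\to\mcO p^{\mr{Zzz...}}_{\mfs\mfl_{2m}}$ are closed immersions. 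Their images are the fixed loci of the involution $\msE^\spadesuit\mapsto(\msE^\spadesuit)^\vee$ given by taking duals. On these fixed loci the induced pairing is symmetric for odd rank and alternating for even rank: an oper's self-duality has a sign forced by the principal $\mfs\mfl_2$ (it is $(-1)^{n-1}$). I would verify this sign directly on the graded pieces.

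Second, recall the construction of $\Dual_{n}$. For a dormant $\mr{GL}_n$-oper, vanishing $p$-curvature gives $\mcF\cong F^*\mcF^\nabla$. The canonical filtration on $F_*F^*(\text{line bundle})$, or equivalently the cokernel of $\mcF^{n-1}$-type maps inside $F^*F_*$, produces the rank-$(p-n)$ dual oper. In practice, $\Dual_n(\mcF)$ is obtained from the kernel of the surjection $F^*F_*(\mcL)\to\mcF$, where $\mcL$ is the last graded piece, twisted appropriately. The key step is to prove that $\Dual$ commutes with the duality involution, i.e. $\Dual_{p-n}$ and $(-)^\vee$ intertwine up to canonical isomorphism: $\Dual_n(\msE^\vee)\cong\Dual_n(\msE)^\vee$. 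For this I would use that $F^*F_*\mcO$ (for the relevant line bundle $\mcL$ with $\mcL^{\otimes p}$-twists) carries a canonical perfect horizontal pairing via the trace $F_*\Omega\to\Omega$ (Cartier/duality for finite flat $F$). Under this pairing the kernel $\mcG$ and $\mcF^\vee$ are mutual orthogonal complements, giving a short exact sequence $0\to\mcG\to F^*F_*\mcL\to\mcF\to0$ whose dual is the analogous sequence for $\mcF^\vee$. A self-duality of $\mcF$ with sign $\epsilon$ is then transported to $\mcG$ with sign $-\epsilon$: the ambient pairing on $F^*F_*\mcL$ has sign $(-1)^{p-1}$ twisted by the Serre-duality sign, and one tracks how the sign flips. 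This flip exchanges symmetric and alternating forms, which is exactly the B$\leftrightarrow$C exchange. Doing this in families over $\overline{\mcM}_{g,r}$, including nodes via log structures, gives $\Dual^\ang$ together with its inverse $\Dual^\ang_{2m}$ by restricting $\Dual_{p-n}\circ\Dual_n=\mr{id}$.

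Third, match the radii. Under the inclusions $\Xi_{\mfs\mfo_{2\ell+1}}\subset\Xi_{\mfs\mfl_{2\ell+1}}$ (multisets $\{\pm a_i,0\}$ mod $p$), the map $(-)^\BBBB$ of \eqref{EQ304} sends $\rho$ to the complement in $\mbF_p$. That complement is symmetric under negation and omits $0$, so it lies in $\Xi_{\mfs\mfp_{2m}}$; this is the $\rho^\BBBB$ of \eqref{EQ28e}. Part (ii) is the case $\ell=0$: dormant $\mfs\mfp_{p-1}$-opers are dual to rank-one objects, so each radius must be the complement of $\{0\}$, namely $\rho_\mr{full}$, and the moduli space is isomorphic to that of the trivial rank-one oper, i.e. $\overline{\mcM}_{g,r}$. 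This is consistent with \cite{Wak2} for $\mfs\mfl_{p-1}$ combined with the fact that the unique dual oper is automatically symplectic.

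I expect the main obstacle to be the sign bookkeeping in the second step: showing the trace pairing on $F^*F_*\mcL$ is horizontal, has the correct parity, and is compatible with the Hodge filtrations in families and at marked and nodal points.
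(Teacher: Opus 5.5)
Your overall reduction is the paper's: self-dual dormant $(\mr{GL}_n,\vartheta)$-opers are identified with orthogonal or symplectic ones, with the sign forced by the parity of the rank (Proposition~\ref{Prop88}, Corollary~\ref{Prop89}). The $\mfs\mfl$-duality $\nabla^\diamondsuit\mapsto\nabla^{\diamondsuit\BBB}$ is then shown to preserve self-duality (Proposition~\ref{Prop458}). Radii are matched by complementation, and (ii) is the case $n=1$.

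\textbf{The gap in Step~2.} This step carries the real content, and as written it does not go through. Dualizing $0\to\mcG\to\mcA\to\mcF\to0$ gives $0\to\mcF^\vee\to\mcA^\vee\to\mcG^\vee\to0$. Here $\mcF^\vee$ is the \emph{subobject}, so this is the defining sequence for the dual of $\mcG^\vee$, not of $\mcF^\vee$. The assertion that $\mcG$ and $\mcF^\vee$ are orthogonal complements is a tautology and carries no information. What this sequence actually yields is $(\mcG^\vee)^{\BBB}\cong\mcF^\vee$ up to twist. For that you must check that $\mcA^\vee\to\mcG^\vee$ is the canonical map $\nu$ for $\mcG^\vee$ with matching flags; this holds because $\mcG\cong\mcA/\mcA^{p-n}$ makes $\mcG/\mcG^1\to\mcA/\mcA^1$ an isomorphism.

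To pass from $\mcF\cong\mcF^\vee\otimes(\cdot)$ to $\mcG\cong\mcG^\vee\otimes(\cdot)$ you need one more ingredient. One option is the involutivity $(\nabla^{\diamondsuit\BBB})^{\BBB}\cong\nabla^\diamondsuit_{\otimes\msT}$ (Proposition~\ref{Prop456}), together with injectivity of $(-)^\BBB$ on isomorphism classes and its compatibility with twisting. The other, which is what the paper uses, is a second exact sequence $0\to\mcF\to\mcA\to\mcG\to0$ with sub and quotient reversed. The paper obtains it on a dense open with a coordinate by writing the oper as $\mcD/\langle D^\clubsuit\rangle$ and proving $D^\clubsuit\cdot D^\clubsuit_\BBB=\partial^p$ in addition to $D^\clubsuit_\BBB\cdot D^\clubsuit=\partial^p$ (Lemma~\ref{Lem54}(ii), using that $\partial^p$ is central). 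It then dualizes that sequence and compares it with the original using self-duality of $\mcA$ and $\mcF$ (Proposition~\ref{Lemma3}). Finally it globalizes via injectivity of restricting opers to a dense open. Your global dualization, completed through Proposition~\ref{Prop456}, would be a legitimate coordinate-free alternative.

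\textbf{The trace pairing is unnecessary.} You do not need a Cartier trace pairing on $F^*F_*\mcL$. The dual of a dormant $\mr{GL}_p$-oper is again a dormant $\mr{GL}_p$-oper, so by the model $\mcD\otimes\varTheta/\langle\mr{Im}({^p}\psi\otimes\mr{id})\rangle$ it is the canonical one generated by its first line. That model also works uniformly at nodes and marked points, where an $F^*F_*$ description would need a separate log-Frobenius treatment.

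\textbf{The sign bookkeeping.} The sign tracking you single out as the main obstacle is not needed. Once $\nabla^{\diamondsuit\BBB}$ is known to be self-dual as a $(\mr{GL}_{2m},\vartheta^\BBB)$-oper, your own Step~1 (Proposition~\ref{Prop88}) forces the induced form to be alternating because $2m$ is even. This is exactly how the paper proceeds: Theorem~\ref{Prop481e} simply composes the bijections of Corollary~\ref{Prop89} and Proposition~\ref{Prop458}. The mechanism you sketch would also not work as stated. The self-duality of $\mcG$ comes from comparing $\mcG$ as a subobject in one sequence with $\mcG$ as a quotient in another, not from restricting the ambient form to $\mcG$. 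So there is no sign that is simply transported from $\epsilon$ to $-\epsilon$, and that part should be dropped.
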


 The resulting  isomorphism  \eqref{EQ40} makes it possible to achieve  a detailed understanding of 
the moduli spaces of 
dormant $\mfs \mfo_{2\ell +1}$- and $\mfs \mfp_{2m}$-opers
for large rank by reducing the problem to  the corresponding  cases of smaller rank, where detailed results are already available  (cf. ~\cite[Theorems F and G]{Wak5}).
As an application,
we construct  
a certain kind of  fusion rule, called a {\it pseudo-fusion rule}
\footnote{These factorization properties  together  form  a fusion rule.
However, due to the lack of appropriate references, we will describe them here  using  the notion of a pseudo-fusion rule, following the framework established in the primary reference ~\cite{Wak5}.}, 
   governing factorization properties on enumerative invariants of these moduli  spaces.
These structures   yield
 Verlinde-type formulas that extend the previously known results to higher-rank cases.
The precise statements  are as follows.

\begin{intthm}[cf. Theorems \ref{Cor21} and \ref{Thm15}] \label{ThmB}
Suppose that  $\mfg = \mfs \mfo_{2\ell +1}$ (resp., $\mfg = \mfs \mfp_{2m}$) for some positive integer  $\ell$ (resp., $m$) with  $1 \leq \ell \leq \frac{p-3}{2}$ (resp., $1 \leq m \leq \frac{p-3}{2}$).
\begin{itemize}
\item[(i)]
Let $\rho := (\rho_{i})_{i=1}^r$ be an $r$-tuple of  elements of $\Xi_\mfg$, where we set $\rho := \emptyset$ when $r = 0$.
Then,   the moduli stack $\mcO p_{\mfg, \rho, g, r}^{^\mr{Zzz...}}$ can be represented by a (possibly empty) proper Deligne-Mumford stack over $k$ and the projection $\Pi_{\mfg, \rho, g, r}$
 is finite and generically \'{e}tale.
\item[(ii)]
Denote by $\Fus_\mfg$ the pseudo-fusion ring for dormant $\mfg$-opers, equipped  with the multiplication $\ast$ (cf. ~\cite[Definition 7.34]{Wak5}).
 Write $\mfS$ for the set of ring homomorphims $\Fus_\mfg \rightarrow \mbC$ and write $\mr{Cas} := \sum_{\lambda \in \Xi_\mfg} \lambda \ast \lambda \left(\in \Fus_\mfg \right)$.
Then, for each $\rho := (\rho_i)_{i=1}^r \in \Xi_\mfg^{r}$,
the following equality holds:
\begin{align}
\mr{deg}(\Pi_{\mfg, \rho, g, r}) = \sum_{\chi \in \mfS} \chi (\mr{Cas})^{g-1} \cdot \prod_{i=1}^r \chi (\rho_i).
\end{align}
In particular, if $r = 0$ (which implies $g > 1$), then this equality reads
\begin{align}
\mr{deg}(\Pi_{\mfg, \emptyset, g, 0}) = \sum_{\chi \in \mfS} \chi (\mr{Cas})^{g-1}.
\end{align}
\end{itemize}
 \end{intthm}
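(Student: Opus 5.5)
The plan for Theorem \ref{ThmB} is to reduce both assertions to the already established small-rank results of \cite{Wak5}, using the duality of Theorem \ref{ThmA}. First I fix the ranges in which \cite[Theorems F and G]{Wak5} apply directly. These give properness, finiteness and generic \'{e}taleness of $\Pi_{\mfg, \rho, g, r}$, together with a pseudo-fusion rule and the resulting Verlinde-type formula. For $\mfg = \mfs\mfo_{2\ell+1}$ and $\mfg = \mfs\mfp_{2m}$ they hold when the rank is small compared to $p$; I expect roughly $2\ell+1 < p/2$, resp.\ $2m < p/2$, which is what makes the relevant deformation spaces unobstructed.

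The complementary ranges are handled by duality. Given $\ell$ with $1 \le \ell \le \frac{p-3}{2}$, set $m := \frac{p-1}{2} - \ell$, so that $m \ge 1$ and $p-1 = 2(\ell+m)$. Whenever $\ell$ lies outside the small-rank range, $m$ lies inside it, and Theorem \ref{ThmA}(i) gives an isomorphism $\Dual^\ang_{2\ell+1,\rho,g,r}$ over $\overline{\mcM}_{g,r}$ onto $\mcO p^{^\mr{Zzz...}}_{\mfs\mfp_{2m},\rho^\BBBB,g,r}$. The symplectic case is symmetric: I use the inverse of the duality, which requires the map $\rho \mapsto \rho^\BBBB$ to be a bijection $\Xi_{\mfs\mfo_{2\ell+1}} \xrightarrow{\sim} \Xi_{\mfs\mfp_{2m}}$, and I would check this from \eqref{EQ28e}. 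Since these are isomorphisms \emph{over} $\overline{\mcM}_{g,r}$, representability by a proper Deligne--Mumford stack, finiteness and generic \'{e}taleness all transfer. This proves (i), and it also gives $\mr{deg}(\Pi_{\mfg,\rho,g,r}) = \mr{deg}(\Pi_{\mfg^\vee,\rho^\BBBB,g,r})$, where $\mfg^\vee$ denotes the dual partner.

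For (ii), I would first verify that the pseudo-fusion-rule axioms of \cite[Definition 7.34]{Wak5} hold for the degrees $N_{g,r}(\rho) := \mr{deg}(\Pi_{\mfg,\rho,g,r})$. These axioms are factorization along clutching morphisms: gluing two pointed curves along a node, and self-gluing, with the radius at the node summed over $\lambda$ paired with its involution $\lambda^\BBBB$ or the appropriate transpose. In the small-rank range this is \cite{Wak5}. In the dual range I transport it, which requires two compatibilities: $\Dual^\ang$ commutes with the clutching morphisms of $\overline{\mcM}_{g,r}$, and $(-)^\BBBB$ intertwines the involutions on $\Xi$ used for gluing radii. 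The first should hold because the duality is constructed locally and canonically, so it respects restriction to nodes and the gluing of radii. Granting these, $\lambda \mapsto \lambda^\BBBB$ induces a ring isomorphism $\Fus_\mfg \cong \Fus_{\mfg^\vee}$ that sends $\mr{Cas}$ to $\mr{Cas}$, because $\mr{Cas}$ is a sum over all of $\Xi$. The standard formalism then converts the pseudo-fusion rule into the character formula. The ring $\Fus_\mfg \otimes \mbC$ is commutative, associative and semisimple, since it carries a nondegenerate trace form coming from $N_{0,3}$. Decomposing the genus-$g$, $r$-pointed degree into $g-1$ copies of $\mr{Cas}$ and the $\rho_i$ yields $\sum_\chi \chi(\mr{Cas})^{g-1}\prod_i\chi(\rho_i)$.

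The main obstacle I anticipate is this compatibility of the duality with clutching and with the radius involution, at the level of generic degrees. In particular, the cases $r=0,1,2$ and the residue data at nodes need care, because for type C the radii at a node might only match up to the sign or twist built into $\rho^\BBBB$. I would handle this first by checking that $\Dual^\ang$ is defined fibrewise on the universal curve and commutes with the gluing of local formal descriptions at the marked points. A further point to check is that the small-rank ranges of \cite{Wak5} and their duals really cover all $1 \le \ell \le \frac{p-3}{2}$. If they do not, I would apply the $\mfs\mfl_n$ duality \eqref{EQ448} as well, but I expect the numerical condition $p-1 = 2(\ell+m)$ to suffice.
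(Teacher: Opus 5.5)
Your proposal is essentially the paper's argument. For (i), the paper also cites \cite[Theorem 8.19]{Wak5} in the range $4\ell+2<p$, $4m<p$, and transports the result to the complementary range through the isomorphism $\Dual^\ang$ over $\overline{\mcM}_{g,r}$ of Theorem \ref{QW1199}; for (ii), it defines $\Fus_\mfg$ in the large-rank range by transporting the small-rank partner's ring along $\lambda\mapsto\lambda^\BBBB$ and then applies \cite[Theorem 7.36, (ii)]{Wak5}.

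The obstacle you single out is not one. The pseudo-fusion axioms are identities among the numbers $\mr{deg}(\Pi_{\mfg,\rho,g,r})$, so the equality $\mr{deg}(\Pi_{\mfg,\rho,g,r})=\mr{deg}(\Pi_{\mfg^\vee,\rho^\BBBB,g,r})$ for all $(\rho,g,r)$ transports them verbatim; the gluing involution $(-)^\BB$ is trivial on $\Xi_{n,\mr{sym}}$, and no compatibility of $\Dual^\ang$ with clutching is needed. For the character formula, cite \cite[Theorem 7.36, (ii)]{Wak5} rather than arguing semisimplicity from a nondegenerate trace form, which by itself does not imply it.
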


Finally, we remark that, even in the cases $\mfg = \mfs \mfo_{2\ell +1}$ and $\mfg= \mfs \mfp_{2m}$, 
a detail analysis   of the stacks  
$\mcO p^{^\mr{Zzz...}}_{\mfg, \rho, 0, 3}$ is indispensable for the explicit determination of   
the degrees $\mr{deg}(\Pi_{\mfg, \rho, g, r})$, since 
   these values  characterize the ring-theoretic structure of the associated fusion ring $\Fus_\mfg$.
 Addressing this issue constitutes an important direction for future research in the enumerative geometry of dormant opers.

\hspace{5mm} 
 \includegraphics[width=16cm,bb=0 0 750 260,clip]{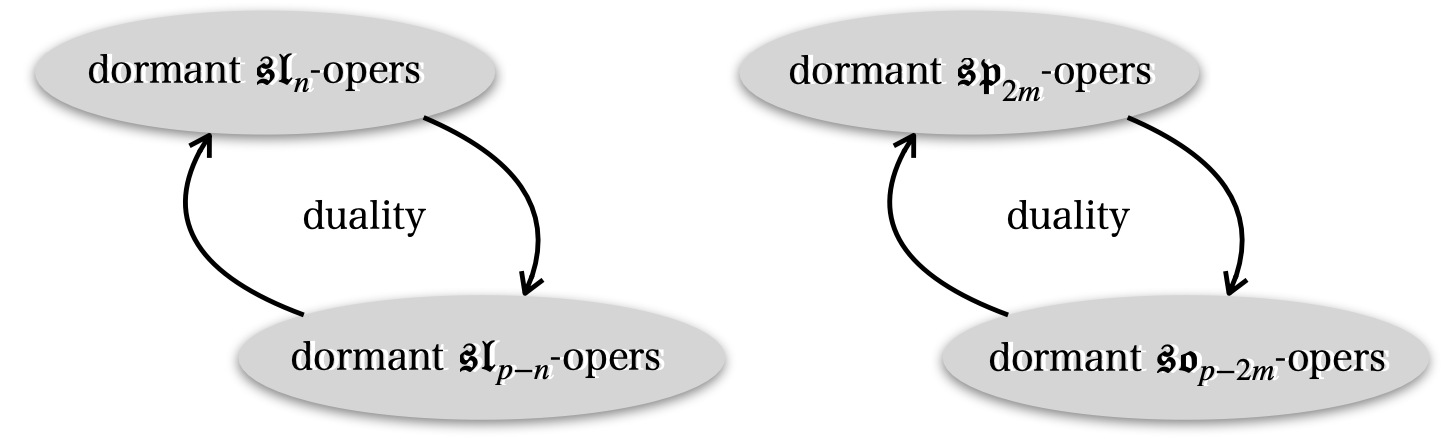}


\vspace{10mm}
\section{Opers on a pointed stable curve} \label{S1}

This section reviews some basic definitions and facts concerning 
dormant opers on a pointed stable curve, formulated within the framework of logarithmic geometry.
In particular, we recall from ~\cite{Wak5} the descriptions of dormant $\mfs \mfl_n$-, $\mfs \mfo_{2\ell +1}$-, and $\mfs \mfp_{2m}$-opers in terms of vector bundles.

Throughout  this paper, we fix a pair of nonnegative  integers $(g, r)$ with $2g-2+r>0$,  a prime number $p$,  and an algebraically closed field $k$ of characteristic $p$.
All schemes appearing in this paper are assumed to be locally noetherian.

\subsection{Algebraic groups and Lie algebras} \label{SS01}

Let us fix 
  a connected simple algebraic group $G$  over $k$ of adjoint type. 
  Also,  fix  a maximal torus $T$
  of $G$ and 
  a Borel subgroup $B$
    of $G$ containing $T$.
    Denote by $\mfg$, $\mfb$, and  $\mft$   the  Lie algebras of $G$, $B$,  and $T$, respectively (hence $\mft \subseteq \mfb \subseteq \mfg$).
In what follows, we assume that  {\it  the characteristic $p$ of the base field $k$ does not divide the order of   the Weyl group $W$ of $(G, T)$.}

Let
$\Phi^+$ be   the set of positive  roots in $B$ with respect to $T$ and by   $\Phi^-$ the set of negative roots. 
Also, denote by $\Gamma \left(\subseteq \Phi^+\right)$ the set of  simple positive  roots.
Given each character $\varphi : T \rightarrow \mbG_m$,
 we abuse notation by writing $\varphi$ for 
  its differential $d \varphi \in \mft^\vee$.
For each $\alpha \in \Phi^+ \cup \Phi^-$, we write
\begin{align} \label{22223}
\mfg^\alpha := \big\{ x \in \mfg \ \big| \ \text{$\mr{ad}(t)(x) =  \alpha  (t) \cdot x$ for all  $t\in T$}  \big\}.
\end{align}
Each $\mfg^{-\alpha}$ (for $\alpha \in \Phi^+$)
can  be regarded as a subspace of $\mfg / \mfb$ closed under the adjoint $B$-action.
The Lie algebra $\mfg$ is equipped with the principal grading  $\mfg = \bigoplus_{i = - \mr{rk}(\mfg)}^{\mr{rk}(\mfg)} \mfg_i$,  which
restricts to the identifications $\mft = \mfg_0$, $\bigoplus_{\alpha \in \Gamma} \mfg^\alpha = \mfg_1$,  and $\bigoplus_{\alpha \in \Gamma} \mfg^{-\alpha} = \mfg_{-1}$.


For  a positive integer $n$, we denote by $\mr{GL}_n$ (resp., $\mr{PGL}_n$) the general (resp., projective) linear group of rank  $n$. 
For a positive integer $\ell$ (resp., $m$), we denote  
 by $\mr{GO}_{2\ell+1}$
  (resp., $\mr{GSp}_{2m}$) 
  the group of  orthogonal similitudes  of rank $2\ell+1$  (resp., the group of symplectic similitudes  of rank  $2m$).
The Lie algebra of its adjoint group is given by 
\begin{align}
\mfs \mfo_{2\ell+1} := \left\{ v \in \mfs \mfl_{2\ell+1} \, | \, {^t}v K_{2\ell+1} + K_{2\ell+1}v = 0\right\}  \\ (\text{resp.,} \  \mfs \mfp_{2m} := \left\{ v \in \mfs \mfl_{2m} \, | \, {^t}v J_{2m} + J_{2m}v = 0\right\}),
\end{align}
where  $K_a := \begin{pmatrix} 0 & 0  &\cdots  & 0 & 1 \\  0 & 0 & \cdots & 1 & 0  \\ \vdots & \vdots &  \reflectbox{$\ddots$} & \vdots    & \vdots  \\  0  & 1 & \cdots  & 0 & 0 \\1 & 0  & \cdots & 0 & 0  \end{pmatrix}  \in \mr{GL}_a$
and 
$J_{2a} := \begin{pmatrix} 0 &  K_a \\ - K_a & 0\end{pmatrix}$ $\in \mr{GL}_{2a}$ for $a \in \mbZ_{>0}$.

Finally, for  a smooth algebraic group $G_0$ over $k$,
 a (right) $G_0$-bundle
 $\mcE$    on  a scheme  $Y$, and 
 a $k$-vector space $\mfh$ equipped with a (left) $G_0$-action,
 we shall write $\mfh_{\mcE}$ for the vector bundle on $Y$ associated with the relative affine space $\mcE \times^{G_0} \mfh \left(:= (\mcE \times_k \mfh) /G_0 \right)$ 
 over $Y$.

\subsection{Log curves and pointed stable curves} \label{SS1}

 For basic properties of log schemes (or more generally,  log stacks),
we refer the reader to  ~\cite{KaKa}, ~\cite{ILL}, and ~\cite{KaFu}.

Let $S^\mr{log}$ be  an fs log scheme over $k$.
Following  ~\cite[Definition 2.1]{Wak20}, we define
 a {\bf log curve} over  $S^\mr{log}$ to be 
 a log smooth integral morphism of fs log $k$-schemes  $f^\mr{log} : X^\mr{log} \rightarrow S^\mr{log}$ such that each geometric fiber of the underlying morphism of schemes $f : X \rightarrow S$ is a (possibly empty) reduced $1$-dimensional scheme.
In particular, both $\Omega_{X^\mr{log}/S^\mr{log}}$ and $\mcT_{X^\mr{log}/S^\mr{log}}$ are line bundles, and the underlying morphism $f : X \rightarrow S$ is flat (cf.  ~\cite[Corollary 4.5]{KaKa}).

Let  $\overline{\mcM}_{g, r}$ denote 
the moduli stack classifying $r$-pointed stable curves of genus $g$ over $k$, and let  $\mcM_{g, r} \left(\subseteq  \overline{\mcM}_{g, r} \right)$ its  dense open substack parametrizing   smooth curves.
We  denote by 
\begin{align}
\msC_{g, r} := (f_{\mr{univ}} : \mcC_{g, r} \rightarrow \overline{\mcM}_{g, r}, \{ \sigma_{\mr{univ}, i} : \overline{\mcM}_{g, r} \rightarrow \mcC_{g, r} \}_{i=1}^r)
\end{align}
the universal family of $r$-pointed stable curves over $\overline{\mcM}_{g, r}$,
   which consists of a prestable curve 
  $f_{\mr{univ}} : \mcC_{g, r} \rightarrow \overline{\mcM}_{g, r}$ over 
  $\overline{\mcM}_{g, r}$ and a collection of mutually disjoint  marked points
  $\sigma_{\mr{univ}, i} : \overline{\mcM}_{g, r} \rightarrow \mcC_{g, r}$ ($i =1, \cdots, r$).
Recall from ~\cite[Theorem 4.5]{KaFu} that  both $\overline{\mcM}_{g, r}$  and $\mcC_{g, r}$ admit natural log structures.
 We denote the resulting fs log stacks by $\overline{\mcM}_{g, r}^\mr{log}$  and $\mcC_{g, r}^\mr{log}$, respectively.
More precisely, the log structure of $\overline{\mcM}_{g, r}^\mr{log}$ is induced by 
the normal crossing divisor defined as the boundary $\partial \overline{\mcM}_{g, r} := \overline{\mcM}_{g, r}\setminus \mcM_{g, r}$.

Let $\msX := (f : X \rightarrow S, \{ \sigma_i \}_{i=1}^r)$ be an $r$-pointed stable curve of genus $g$ over a $k$-scheme $S$, where $\sigma_i$ denotes the $i$-th marked point $S \rightarrow X$.
Both $S$ and $X$ are equipped with log structures pulled-back from $\overline{\mcM}_{g, r}^\mr{log}$ and $\mcC_{g, r}^\mr{log}$, respectively,  via 
 the classifying morphism $S \rightarrow \overline{\mcM}_{g, r}$ of $\msX$.
We denote the resulting fs log schemes by $X^\mr{log}$ and $S^\mr{log}$, respectively.
 The structure morphism $f : X \rightarrow S$ extends to a morphism of log schemes $f^\mr{log} : X^\mr{log} \rightarrow S^\mr{log}$, forming a log  curve.

\subsection{Opers on a log curve} \label{SS02}

Let us take an fs log scheme $S^\mr{log}$ over $k$ and a log curve  $f^\mr{log} : X^\mr{log} \rightarrow S^\mr{log}$  over $S^\mr{log}$.
For each $j \in \mbZ_{\geq 0} \sqcup \{ \infty \}$,
we denote by $\mcD_{\leq j}$ the sheaf of logarithmic crystalline differential operators of order $\leq j$ on $X^\mr{log}/S^\mr{log}$, i.e., the sheaf ``$\mcD_{\hslash, Y^\mr{log}/T^\mr{log}}^{< j+1}$" defined in ~\cite[Section 4.2.1]{Wak5}, where  the pair $(Y^\mr{log}/T^\mr{log}, \hslash)$ is taken to be  $(X^\mr{log}/S^\mr{log}, 1)$.
For simplicity of notation, we write $\Omega := \Omega_{X^\mr{log}/S^\mr{log}}$, $\mcT := \mcT_{X^\mr{log}/S^\mr{log}}$, and $\mcD := \mcD_{\leq \infty}$.

Now, let us take a (right) $B$-bundle $\pi_B : \mcE_B \rightarrow X$ on $X$.
Denote by $\pi : \mcE \rightarrow X$ the $G$-bundle associated with $\mcE_B$ via extension of structure group along  the inclusion $B \hookrightarrow G$.
In particular, $\mcE_B$ determines a $B$-reduction of $\mcE$.
The relative affine space $\mcE \times^G \mfg$ associated with 
the adjoint $G$-action  on $\mfg$ defines 
 a vector bundle $\mfg_{\mcE}$ on $X$, i.e., the adjoint vector bundle of $\mcE$.

Suppose that we are given
an $S^\mr{log}$-connection $\nabla : \mcT \rightarrow (\pi_*(\mcT_{\mcE^\mr{log}/S^\mr{log}}))^G$  on   $\mcE$, where $\mcE^\mr{iog}$ denotes the scheme $\mcE$ equipped with the log structure pulled-back from $X^\mr{log}$ via $\pi$ and $(\pi_*(\mcT_{\mcE^\mr{log}/S^\mr{log}}))^G$ denotes the subsheaf of $\pi_*(\mcT_{\mcE^\mr{log}/S^\mr{log}})$ consisting of sections invariant under the natural $G$-action  (cf. ~\cite[Definition 1.17]{Wak5} for the definition of an $S^\mr{log}$-connection).
 Let us  choose, locally on $X$,  an $S^\mr{log}$-connection $\nabla'$ on $\mcE$ preserving $\mcE_B$ (i.e., $\mr{Im}(\nabla) \subseteq (\pi_{B*}(\mcT_{\mcE_B^\mr{log}/S^\mr{log}}))^B$), and take the difference $\nabla - \nabla'$, which specifies   a section of $\Omega \otimes \mfg_{\mcE_B} \left(=\Omega \otimes \mfg_{\mcE}\right)$.
The local section of  $\Omega\otimes (\mfg/\mfb)_{\mcE_B}$ determined by this section via projection
does not depend on the choice of  $\nabla'$.
Hence, these sections defined for various $\nabla'$'s 
can be glued together to yield   a global section of $\Omega \otimes (\mfg/\mfb)_{\mcE_B}$; we shall denote this section
 by $\nabla /\mcE_B$.


A {\bf $\mfg$-oper} on 
$X^\mr{log}/S^\mr{log}$ is defined to be 
 a pair
$\msE^\spadesuit := (\mcE_B, \nabla)$, where
$\mcE_B$ and  $\nabla$ are as above such that 
the section  $\nabla/ \mcE_B$ lies in the submodule $\Omega \otimes (\bigoplus_{\alpha \in \Gamma} \mfg^{-\alpha})_{\mcE_B}$
 and that  its  image in $\Omega  \otimes \mfg^{- \beta}_{\mcE_B}$ (for each $\beta \in \Gamma$) via the projection $\bigoplus_{\alpha \in \Gamma} \mfg^{-\alpha} \twoheadrightarrow \mfg^{-\beta}$ specifies 
a nowhere vanishing section.
In a natural manner, one can define the notion of an isomorphism between $\mfg$-opers.
When $X^\mr{log}/S^\mr{log}$ arises from a pointed stable  curve $\msX$ over the underlying scheme $S$ of $S^\mr{log}$,
any $\mfg$-oper on $X^\mr{log}/S^\mr{log}$ will be referred to as  a $\mfg$-oper {\it on $\msX$}.
Also,  a $\mfg$-oper is said to be {\bf dormant} if its $p$-curvature vanishes identically  (cf. ~\cite[Definition 3.8]{Wak5} for the definition of the $p$-curvature of a logarithmic  flat $G$-bundle).

Next, we recall  the definition of a $\mr{GL}_n$-oper, where $n$ is a positive integer,  formulated in terms of vector bundles.
Consider a collection  of data
\begin{align} \label{EQ53}
\msF^\heartsuit := (\mcF, \nabla, \{ \mcF^j \}_{j=0}^n),
\end{align}
where 
\begin{itemize}
\item
$\mcF$ denotes a vector bundle on $X$ of rank $n$;
\item
$\nabla$ denotes an $S^\mr{log}$-connection on $\mcF$ in the sense of ~\cite[Definition 4.1]{Wak5}, i.e.,
an $f^{-1} (\mcO_S)$-linear morphism $\nabla : \mcF \rightarrow \Omega \otimes \mcF$ satisfying the Leibniz rule: $\nabla (a \cdot v) = da \otimes v + a \cdot \nabla (v)$ for any local sections $a \in \mcO_X$ and $v \in \mcF$;
\item
$\{ \mcF^j \}_j$ is a complete flag   on $\mcF$, i.e., a chain of subbundles
\begin{align}
0 = \mcF^n \subseteq \mcF^{n-1} \subseteq \cdots \subseteq \mcF^0 = \mcF
\end{align}
of $\mcF$ such that the subquotients $\mcF^j /\mcF^{j+1}$ are line bundles.
\end{itemize}
We say that $\msF^\heartsuit$ is a {\bf $\mr{GL}_n$-oper}  on $X^\mr{log}/S^\mr{log}$
if
it satisfies  the following two conditions:
\begin{itemize}
\item
For every $j=1, \cdots, n-1$,
$\nabla (\mcF^j)$ is contained in $\Omega \otimes \mcF^{j-1}$;
\item
For every $j=1, \cdots, n-1$,
the well-defined {\it $\mcO_X$-linear} morphism
\begin{align}
\mr{KS}_{\msF^\heartsuit}^j : \mcF^j/\mcF^{j+1} \rightarrow \Omega \otimes (\mcF^{j-1}/\mcF^j)
\end{align}
defined by $\overline{a} \mapsto \overline{\nabla (a)}$ for any local section $a \in \mcF^j$ (where $\overline{(-)}$'s denote the images in the respective quotients)  is an isomorphism.
\end{itemize}
The notion of an isomorphism between two $\mr{GL}_n$-opers  can be defined in a straightforward manner, and we omit the details.

For a flat vector bundle  $(\mcF, \nabla)$ on $X^\mr{log}/S^\mr{log}$ (i.e., $\mcF$ is a vector bundle on $X$ and $\nabla$ is an $S^\mr{log}$-connection on $\mcF$),
 the {\bf $p$-curvature} of  $\nabla$ is the $\mcO_X$-linear morphism ${^p}\psi_\nabla : \mcT^{\otimes p} \rightarrow \mcE nd_{\mcO_X} (\mcF)$  determined  by ${^p}\psi_\nabla (\partial^{\otimes p}) =  \nabla_\partial^p - \nabla_{\partial^{[p]}}$ for any local section $\partial \in \mcT$.
 Here,  
 $\nabla_{\partial'} := (\partial' \otimes \mr{id}_\mcF) \circ \nabla$ for $\partial' \in \mcT$, and $\partial^{[p]}$ denotes the $p$-th symbolic power of $\partial$ (i.e., ``$\partial \mapsto \partial^{(p)}$" asserted in 
 ~\cite[Proposition 1.2.1]{Ogu1}).
 A $\mr{GL}_n$-oper $\msF^\heartsuit := (\mcF, \nabla, \{ \mcF^j \}_j)$ is said to be {\bf dormant} if  the $p$-curvature  ${^p}\psi_\nabla$ of $\nabla$ vanishes  identically.

\subsection{$(\mr{GL}_n, \vartheta)$-, $(\mr{GO}_{2\ell +1}, \vartheta)$-, and $(\mr{GSp}_{2m}, \vartheta)$-opers}
 \label{p0140p2}

Recall  from ~\cite[Definition 4.31, (i)]{Wak5} that an {\bf $n$-theta characteristic} of $X^\mr{log}/S^\mr{log}$ is defined as a pair
$\vartheta := (\varTheta, \nabla_\vartheta)$
consisting of a line bundle $\varTheta$ on $X$ and an $S^\mr{log}$-connection  $\nabla_\vartheta$ on the line bundle $\mcT^{\otimes \frac{n (n-1)}{2}} \otimes \varTheta^{\otimes n}$.
An $n$-theta characteristic $\vartheta := (\varTheta, \nabla_\vartheta)$ is  said to be   {\bf dormant} if 
$\nabla_\vartheta$ has vanishing $p$-curvature.

We fix  a dormant $n$-theta characteristic $\vartheta := (\varTheta, \nabla_\vartheta)$ of $\msX$.
(According to the discussion in ~\cite[Section 4.6.4]{Wak5}, such an object always exists.)
We set
\begin{align}
\mcF_{\varTheta, n} := \mcD_{\leq n-1} \otimes \varTheta, \hspace{5mm} \mcF_{\varTheta, n}^j := \mcD_{\leq n  -j-1} \otimes \varTheta
 \ (j=0, \cdots, n).
\end{align}
Since $\mcF_{\varTheta, n}^{j}/\mcF_{\varTheta, n}^{j+1}$ ($j=0, \cdots, n-1$)
can be identified with $\mcT^{\otimes (n-j-1)} \otimes \varTheta$, we obtain a composite isomorphism
\begin{align} \label{EQ129}
\mr{det}(\mcF_{\varTheta, n}) \xrightarrow{\sim} \bigotimes_{j=0}^{n-1} \mcF_{\varTheta, n}^j/\mcF_{\varTheta, n}^{j+1} \xrightarrow{\sim}
\bigotimes_{j=0}^{n-1} (\mcT^{\otimes (n-j-1)} \otimes \varTheta)
\xrightarrow{\sim} \mcT^{\otimes \frac{n (n-1)}{2}} \otimes \varTheta^{\otimes n}.
\end{align}

\bde[cf. ~\cite{Wak5}, Definitions 4.36 and 5.4]  \label{pp15}
\begin{itemize}
\item[(i)]
By a {\bf $(\mr{GL}_n, \vartheta)$-oper} on $X^\mr{log}/S^\mr{log}$, we mean an $S^\mr{log}$-connection $\nabla^\diamondsuit$ on $\mcF_{\varTheta, n}$ such that
the collection 
\begin{align}
\nabla^{\diamondsuit \Rightarrow \heartsuit} := (\mcF_{\varTheta, n}, \nabla^\diamondsuit, \{ \mcF^j_{\varTheta, n} \}_{j=0}^n)
\end{align}
 forms a $\mr{GL}_n$-oper, and such that the $S^\mr{log}$-connection $\mr{det}(\nabla^\diamondsuit)$ on $\mr{det}(\mcF_{\varTheta, n})$ induced by  $\nabla^\diamondsuit$ coincides with $\nabla_\vartheta$ under the identification $\mr{det}(\mcT_{\varTheta, n}) = \mcT^{\otimes \frac{n(n-1)}{2}} \otimes \varTheta^{\otimes n}$ given by  \eqref{EQ129}.
\item[(ii)]
For  two $(\mr{GL}_n, \vartheta)$-opers $\nabla_1^\diamondsuit$ and  $\nabla_2^\diamondsuit$,  
an {\bf isomorphism of $(\mr{GL}_n, \vartheta)$-opers} from $\nabla_1^\diamondsuit$ to $\nabla_2^\diamondsuit$ is defined to be an isomorphism of $\mr{GL}_n$-opers from $\nabla_1^{\diamondsuit \Rightarrow \heartsuit}$ to $\nabla_2^{\diamondsuit \Rightarrow \heartsuit}$.
 \end{itemize}
  \ede

Next, suppose that $n < p$, and  write
\begin{align} \label{QW923}
\mcN_\bb := \mcT^{\otimes (n-1)} \otimes \varTheta^{\otimes 2}.
\index{${^\dagger}\mcN_\bb$}
\end{align}
According to  ~\cite[Proposition 4.22, (i)]{Wak5}, 
 there exists a unique $S^\mr{log}$-connection
$\nabla_{\mcN_\bb}$
on $\mcN_\bb$ satisfying   $\nabla_{\mcN_\bb}^{\otimes n} = \nabla_\bb^{\otimes 2}$ under the natural identification  $\mcN_\bb^{\otimes n} = (\mcT^{\otimes \frac{n(n-1)}{2}} \otimes \varTheta^{\otimes n})^{\otimes 2}$.
Here, given  a flat bundle $(\mcG, \nabla_\mcG)$ and $\ell \in \mbZ_{>0}$, we write $\nabla_\mcG^{\otimes \ell}$ for 
the $S^\mr{log}$-connection on the $\ell$-fold tensor product  $\mcG^{\otimes \ell}$ of $\mcG$ induced naturally from $\nabla_\mcG$.
The pair of $\mcN_\vartheta$ and $\nabla_{\mcN_\vartheta}$ defines  a flat  line bundle 
\begin{align} \label{QW1302}
\msN_\bb := (\mcN_\bb, \nabla_{\mcN_\bb}).
\end{align}

\bde \label{Def57}
Suppose further that $n =2\ell +1$ (resp., $n =2m$) for some positive integer $\ell$ (resp., $m$).
\begin{itemize}
\item[(i)]
By a {\bf $(\mr{GO}_{2\ell +1}, \vartheta)$-oper} (resp., a {\bf $(\mr{GSp}_{2m}, \vartheta)$-oper}) on $X^\mr{log}/S^\mr{log}$, we mean   a pair 
\begin{align} \label{QW611}
\nabla^\diamondsuit_{\sphericalangle} := (\nabla^\diamondsuit,  \omega)
\end{align}
consisting of a $(\mr{GL}_{n}, \bb)$-oper $\nabla^\diamondsuit$ on $X^\mr{log}/S^\mr{log}$ and a nondegenerate  symmetric (resp., skew-symmetric)
 $\mcO_X$-bilinear map  $\omega : \mcF_{\varTheta, n}^{\otimes 2} \rightarrow \mcN_\vartheta$ on $\mcF_{\varTheta, n}$ 
 satisfying the following two conditions:
 \begin{itemize}
 \item
 The $S^\mr{log}$-connection  $(\nabla^{\diamondsuit})^{\otimes 2}$ on $\mcF_{\varTheta, n}^{\otimes 2}$ induced by $\nabla^\diamondsuit$  is compatible with $\nabla_{\mcN_\vartheta}$ via 
 $\omega$;
 \item
 For any $j=0, \cdots, n$, 
 the equality $\mcF^{n-j} = (\mcF^{j})^\bot$
   holds, where 
   $ (\mcF^{j})^\bot$ denotes the $\mcO_X$-submodule of $\mcF$ consisting of local sections $v$ with $\omega (v, u) =0$ for all local sections  $u \in \mcF^j$.
 \end{itemize}
 \item[(ii)]
 For  two $(\mr{GO}_{2\ell +1}, \vartheta)$-opers (resp., $(\mr{GSp}_{2m}, \vartheta)$-opers) $\nabla_{\ang, 1}^\diamondsuit := (\nabla_1^\diamondsuit, \omega_1)$, $\nabla_{\ang, 2}^\diamondsuit := (\nabla_2^\diamondsuit, \omega_2)$,  
an {\bf isomorphism of $(\mr{GO}_{2\ell +1}, \vartheta)$-opers} (resp., {\bf of $(\mr{GSp}_{2m}, \vartheta)$-opers}) from $\nabla_1^\diamondsuit$ to $\nabla_2^\diamondsuit$ is defined to be an isomorphism of $\mr{GL}_n$-opers from $\nabla_1^{\diamondsuit \Rightarrow \heartsuit}$ to $\nabla_2^{\diamondsuit \Rightarrow \heartsuit}$ under  which the bilinear map $\omega_1$ corresponds to  $\omega_2$.
\end{itemize}
\ede

When $X^\mr{log}/S^\mr{log}$ arises from a pointed stable curve $\msX$ over the underlying scheme $S$ of $S^\mr{log}$, any $(G, \vartheta)$-oper on $X^\mr{log}/S^\mr{log}$ for $G \in \{ \mr{GL}_n, \mr{GO}_{2\ell +1}, \mr{GSp}_{2m} \}$  will be referred to as  a {\bf  $G$-oper  on $\msX$}.

\bde[cf. ~\cite{Wak5}, Definition 4.61] \label{Def41}
A $(\mr{GL}_n, \vartheta)$-oper (resp., $(\mr{GO}_{2\ell +1}, \vartheta)$-oper; $(\mr{GSp}_{2m}, \vartheta)$-oper) is said to be {\bf dormant}
if the underlying  $\mr{GL}_n$-oper is dormant.
\ede

According to ~\cite[Theorem 4.49]{Wak5},  changing the structure group by the quotient $\mr{GL}_n \twoheadrightarrow \mr{PGL}_n$ yields a bijection of sets 
\begin{align} \label{EQ33}
\begin{pmatrix} \text{the set of isomorphism classes of} \\  \text{$(\mr{GL}_{n}, \vartheta)$-opers on $X^\mr{log}/S^\mr{log}$} \end{pmatrix}
\xrightarrow{\sim}
\begin{pmatrix} \text{the set of isomorphism classes of} \\  \text{$\mfs \mfl_{n}$-opers on $X^\mr{log}/S^\mr{log}$}\end{pmatrix}.
\end{align}
For a $(\mr{GL}_n, \vartheta)$-oper $\nabla^\diamondsuit$, we shall write $\nabla^{\diamondsuit \Rightarrow \spadesuit}$ for the corresponding $\mfs \mfl_n$-oper.

It  follows from 
 ~\cite[Propositions 5.3, 5.6]{Wak5} that the assignment $\nabla^\diamondsuit \mapsto \nabla^{\diamondsuit \Rightarrow \spadesuit}$ extends to a bijection
\begin{align} \label{EQ3}
\begin{pmatrix} \text{the set of isomorphism classes of} \\  \text{$(\mr{GO}_{2\ell +1}, \vartheta)$-opers} \\\text{(resp., $(\mr{GSp}_{2m}, \vartheta)$-opers) on $X^\mr{log}/S^\mr{log}$}\end{pmatrix}
\xrightarrow{\sim}
\begin{pmatrix} \text{the set of isomorphism classes of} \\  \text{$\mfs \mfo_{2\ell +1}$-opers} \\\text{(resp., $\mfs \mfo_{2m}$-opers) on $X^\mr{log}/S^\mr{log}$}\end{pmatrix}.
\end{align}
For a $(\mr{GO}_{2\ell +1}, \vartheta)$-oper (resp., a $(\mr{GSp}_{2m}, \vartheta)$-oper) $\nabla^\diamondsuit_{\ang}$,
we denote by $\nabla_{\ang}^{\diamondsuit \Rightarrow \spadesuit}$ the corresponding $\mfs \mfo_{2\ell +1}$-oper (resp., $\mfs \mfp_{2m}$-oper).

Moreover, it restricts to a bijection
\begin{align} \label{EQ5}
\begin{pmatrix} \text{the set of isomorphism classes of} \\  \text{dormant $(\mr{GO}_{2\ell +1}, \vartheta)$-opers} \\\text{(resp., dormant $(\mr{GSp}_{2m}, \vartheta)$-opers)} \\ \text{on $X^\mr{log}/S^\mr{log}$}\end{pmatrix}
\xrightarrow{\sim}
\begin{pmatrix} \text{the set of isomorphism classes of} \\  \text{dormant $\mfs \mfo_{2\ell +1}$-opers} \\\text{(resp., dormant $\mfs \mfo_{2m}$-opers)} \\ \text{on $X^\mr{log}/S^\mr{log}$}\end{pmatrix}.
\end{align}
The formations of these bijections  commute with  base-changes to fs log schemes over $S^\mr{log}$.

\subsection{Example: Canonical dormant $\mfs \mfp_{p-1}$-oper} \label{SS99}

Suppose that $X$ is a geometrically connected,  proper, and  smooth curve  over $S$ (hence $X = X^\mr{log}$).
Denote by $\mcB$ the locally exact $1$-forms on $X$ relative to $S$, i.e., the image of the universal derivation $d : \mcO_X \rightarrow \Omega$.
This sheaf forms a rank $p-1$ vector bundle on the Frobenius twist $X^{(1)}$ of $X$ over $S$ via the underlying homeomorphism of the relative Frobenius morphism $F_{X/S} : X \rightarrow X^{(1)}$.
In particular, we obtain a short exact sequence of $\mcO_{X^{(1)}}$-modules
$0 \rightarrow \mcO_{X^{(1)}} \rightarrow F_{X/S*} (\mcO_{X}) \xrightarrow{} \mcB \rightarrow 0$.
Pulling-back  this sequence along $F_{X/S}$ yields  a short exact sequence of $\mcO_X$-modules
$0 \rightarrow \mcO_{X} \rightarrow \mcA \rightarrow \overline{\mcA} \rightarrow 0$, 
where $\mcA := F_{X/S}^* (F_{X/S*} (\mcO_X))$ and $\overline{\mcA} := F^*_{X/S} (\mcB)$.
Here, for a vector bundle $\mcG$ on $X^{(1)}$, we shall write $\nabla^\mr{can}_{\mcG}$ the canonical  $S \left(= S^\mr{log} \right)$-connection on $F_{X/S}^* (\mcG)$ with vanishing $p$-curvature, as constructed  in ~\cite[Theorem 5.1]{NKat0}.

Using the canonical connection $\nabla_{F_{X/S*}(\mcO_X)}^\mr{can}$, we construct a $p$-step filtration $\{ \mcA^j \}_{j=0}^p$ on the rank $p$ vector bundle $\mcA$ as follows.
\begin{align}
\mcA^0 &:= \mcA; \\
\mcA^1 &:= \mr{Ker} \left( \mcA \xrightarrow{q} \mcO_X\right); \\
\mcA^j &:= \mr{Ker} \left(\mcA^{j-1} \xrightarrow{\nabla_{F_{X/S*}(\mcO_X)}^\mr{can} |_{\mcA^{j-1}}} \Omega \otimes \mcA \xrightarrow{\mr{quotient}} 
\Omega \otimes \left( \mcA/\mcA^{j-1}\right) \right)
\end{align}
($j=2, \cdots, p$), where $q$ denotes the morphism $\mcA \rightarrow \mcO_X$ corresponding to the identity morphism of $F_{X/S*}(\mcO_X)$ via the adjunction relation ``$F^*_{X/S} (-) \dashv F_{X/S*}(-)$".
According to ~\cite[Section 5.3, Theorem]{JRXY}, ~\cite[Lemma 2.1]{Sun} (see also  ~\cite[Proposition 9.1]{Wak5}),
the collection of data 
\begin{align} \label{EQ101}
\msA^\heartsuit := (\mcA, \nabla_{F_{X/S*}(\mcO_X)}^\mr{can}, \{ \mcA^j \}_{j=0}^p)
\end{align}
 forms a dormant $\mr{GL}_p$-oper.
 Moreover,  the composite 
 \begin{align} \label{EQ1000}
 \mcO_X \hookrightarrow \mcA \twoheadrightarrow \mcA /\mcA^1 \left(= \mcA^0/\mcA^1 \right)
 \end{align}
  is an isomorphism.
  Here,   the first arrow denotes the pull-back along $F_{X/S}$ of the natural morphism $\mcO_{X^{(1)}} \rightarrow F_{X/S*}(\mcO_X)$ induced by $F_{X/S}$.

For each $j=1, 2,  \cdots, p-1$, we obtain a composite isomorphism
\begin{align} \label{EQ61}
\Omega^{\otimes j} \xrightarrow{\sim}\Omega^{\otimes j} \otimes (\mcA^0/\mcA^1) \xrightarrow{\sim} \cdots \xrightarrow{\sim} \Omega \otimes (\mcA^{j-1}/\mcA^{j})\xrightarrow{\sim}\mcA^{j}/\mcA^{j+1},
\end{align}
where the first arrow arises from \eqref{EQ1000}, and  for each $s  = 2, \cdots, j +1$ the  $s$-th arrow  arises from $\mr{KS}_{\msA^\heartsuit}^{s-1}$.
The  composite
\begin{align} \label{EQ55}
\mcF_{\Omega^{\otimes (p-1)}, p-1} \left(= \mcD_{\leq p-2} \otimes \Omega^{\otimes (p-1)} \right) & \xrightarrow{\mr{id} \otimes (\eqref{EQ61} \, \text{for} \, j=p-1)} \mcD_{\leq p-2} \otimes \mcA^{p-1}  \\
& \hookrightarrow  \mcD \otimes \mcA \notag \\
&  \rightarrow \mcA \notag \\
& \twoheadrightarrow \overline{\mcA} \notag
\end{align}
is verified to be an isomorphism, where the second arrow arises from the inclusions $\mcD_{\leq p-2} \hookrightarrow \mcD$ and 
$\mcA^{p-1} \hookrightarrow \mcA$, and the third arrow arises from $\nabla^\mr{can}_{F_{X/S*}(\mcO_X)}$.
We also obtain a composite isomorphism
\begin{align} \label{EQ62}
\mr{det}(\overline{\mcA}) \xrightarrow{\sim} \bigotimes_{j=1}^{p-1} \mcA^j/\mcA^{j+1} \xrightarrow{\sim} \bigotimes_{j=1}^{p-1}  \Omega^{\otimes j}
\xrightarrow{\sim} \mcT^{\otimes \frac{(p-1)(p-2)}{2}} \otimes (\Omega^{\otimes (p-1)})^{\otimes (p-1)}.
\end{align}

Denote by $\nabla_\mr{Ray}^\diamondsuit$ the $S$-connection on  $\mcF_{\Omega^{\otimes (p-1)}, p-1}$ corresponding to the $S$-connection $\nabla^\mr{can}_{\mcB}$ on $\overline{\mcA}$ via \eqref{EQ55}, and by $\nabla_{\vartheta_\mr{Ray}}$ the $S$-connection on $\mcT^{\otimes \frac{(p-1)(p-2)}{2}} \otimes (\Omega^{\otimes (p-1)})^{\otimes (p-1)}$ corresponding to the $S$-connection $\mr{det}(\nabla_\mcB^\mr{can})$ on  $\mr{det}(\overline{\mcA})$ via \eqref{EQ62}.
Then, the pair 
\begin{align} \label{EQ281}
\vartheta_\mr{Ray} := (\Omega^{\otimes (p-1)}, \nabla_{\vartheta_\mr{Ray}})
\end{align}
 defines a dormant $(p-1)$-theta characteristic, and $\nabla_\mr{Ray}^\diamondsuit$ forms a dormant $(\mr{GL}_{p-1}, \vartheta_\mr{Ray})$-oper.

Next, recall from  ~\cite[Section 4.1]{Ray}  that the assignment 
$(a, b) \mapsto C (a db)$ on $F_{X/S*} (\mcO_X) \times F_{X/S*} (\mcO_X)$ defines, upon passing through the quotient $F_{X/S*}(\mcO_X) \rightarrow \mcB$, a nondegenerate skew-symmetric  $\mcO_{X^{(1)}}$-bilinear map
$\mcB \otimes \mcB \rightarrow \Omega_{X^{(1)}/S}$
on $\mcB$.
Its  pull-back  along  $F_{X/S}$ determines an $\mcO_X$-bilinear map $F^*_{X/S} (\mcB) \otimes F^*_{X/S} (\mcB) \rightarrow \Omega_{X/S}^{\otimes p} \left(\cong F^*_{X/S}(\Omega_{X^{(1)}/S}) \right)$, with respect to which $\nabla_{\mcB}^{\otimes 2}$ is compatible with $\nabla^\mr{can}_{\Omega_{X^{(1)}/S}}$.
Using the identification $\mcF_{\Omega^{\otimes (p-1)}, p-1} = \overline{\mcA}$ given by \eqref{EQ55}, 
this defines a nondegenerate $\mcO_X$-linear map $\omega_\mr{Ray}$ on $\mcF_{\Omega^{\otimes (p-1)}, p-1}$.
The resulting pair
\begin{align} \label{EQ63}
\nabla_{\ang, \mr{Ray}}^\diamondsuit := (\Omega^{\otimes (p-1)}, \omega_{\mr{Ray}})
\end{align}
specifies a dormant $(\mr{GSp}_{p-1}, \vartheta_\mr{Ray})$-oper, and hence, it induces a dormant $\mfs \mfp_{p-1}$-oper $\nabla_{\ang, \mr{Ray}}^{\diamondsuit \Rightarrow \spadesuit}$.

\subsection{Twisting with a flat line bundle} \label{SS511}

Let us retain  the notation before Section \ref{SS99}, and suppose that we are given a flat line bundle 
 $\msL := (\mcL, \nabla_\mcL)$ on $X^\mr{log}/S^\mr{log}$, i.e., 
 $\mcL$ is a line bundle on $X$ and $\nabla_\mcL$ denotes an $S^\mr{log}$-connection on $\mcL$.
We then define 
 \begin{align}
 \vartheta \otimes \mcL := (\varTheta \otimes \mcL, \nabla_\vartheta \otimes \nabla_\mcL^{\otimes n}),
 \end{align}
 where 
  $ \nabla_\vartheta \otimes \nabla_\mcL^{\otimes n}$ denotes the $S^\mr{log}$-connection on $\mcT^{\otimes \frac{n (n-1)}{2}} \otimes (\varTheta \otimes \mcL)^{\otimes n} \left(\cong (\mcT^{\otimes \frac{n (n-1)}{2}} \otimes \varTheta^{\otimes n}) \otimes \mcL^{\otimes n} \right)$  induced from $\nabla_\vartheta$ and $\nabla_\mcL^{\otimes n}$.
 This pair 
  forms 
a (dormant) $n$-theta characteristic of $X^\mr{log}/S^\mr{log}$ (cf. ~\cite[Section 4.6.5]{Wak5}).

Now, let $\nabla^\diamondsuit$ be a  $(\mr{GL}_n, \vartheta)$-oper on $X^\mr{log}/S^\mr{log}$.
The tensor product $\mcL\otimes \mcD$ carries  a  left $\mcD$-action 
\begin{align}
\mcD \otimes (\mcL \otimes \mcD) \rightarrow \mcL \otimes \mcD
\end{align}
extending its $\mcO_X$-module structure and
arising   from both the $\mcD$-action corresponding to $\nabla_\mcL$ and the natural left $\mcD$-action on $\mcD$ itself.
This morphism restricts to an isomorphism of $\mcO_X$-modules
\begin{align}
\label{Eq502}
\mcD_{\leq n-1} \otimes \mcL \left(= \mcD_{\leq n-1} \otimes (\mcL \otimes \mcD_{\leq 0}) \right)
\xrightarrow{\sim} \mcL \otimes \mcD_{\leq n-1}.
\end{align}
Tensoring  this isomorphism with the identity morphism of $\varTheta$ yields  an isomorphism
\begin{align} \label{EQ556}
\mcF_{\mcL \otimes \varTheta, n} \left(= \mcD_{\leq n-1} \otimes \mcL \otimes \varTheta \right) \xrightarrow{\sim} \left(\mcL \otimes \mcD_{\leq n-1} \otimes \varTheta = \right) \mcL \otimes \mcF_{\varTheta, n}
\end{align}
Via  this isomorphism, $\nabla^\diamondsuit \otimes \nabla_\mcL$ corresponds to an $S^\mr{log}$-connection 
\begin{align}
\nabla^{\diamondsuit}_{\otimes \msL} : \mcF_{\mcL \otimes \varTheta, n} \rightarrow \Omega \otimes \mcF_{\mcL \otimes \varTheta, n}
\end{align}
on $\mcF_{\mcL \otimes \varTheta, n}$, which forms
 a  $(\mr{GL}_n, \vartheta \otimes \msL)$-oper.

When $\nabla^\diamondsuit$ is dormant and $\nabla_\mcL$ has vanishing $p$-curvature,  the resulting $(\mr{GL}_n, \vartheta \otimes \msL)$-oper $\nabla^{\diamondsuit}_{\otimes \msL}$ is dormant.
By construction, the associated $\mfs \mfl_n$-opers 
$\nabla^{\diamondsuit \Rightarrow \spadesuit}$ and $(\nabla^\diamondsuit_{\otimes \msL})^{\Rightarrow \spadesuit}$ are isomorphic.
Moreover, if we   write $\msL^\vee := (\mcL^\vee, \nabla_\mcL^\vee)$, where $\nabla_\mcL^\vee$ denotes the $S^\mr{log}$-connection on the dual $\mcL^\vee$ of $\mcL$ induced  from $\nabla_\mcL$,
then the $(\mr{GL}_n, \vartheta)$-oper $\nabla^\diamondsuit$ is isomorphic to $(\nabla^\diamondsuit_{\otimes \msL})_{\otimes \msL^\vee}$ under the natural  identification $(\vartheta \otimes \msL)\otimes \msL^\vee = \vartheta$.

Next, suppose that  $p > n = 2\ell +1$ (resp., $p> n =2m$) for some positive integer $\ell$ (resp., $m$).
Let $\nabla_\ang^\diamondsuit := (\nabla^\diamondsuit, \omega)$ be a $(\mr{GO}_{2\ell +1}, \vartheta)$-oper (resp., a $(\mr{GSp}_{2m}, \vartheta)$-oper) on $X^\mr{log}/S^\mr{log}$.
Then,  
\begin{align} \label{QW602}
\omega_{\otimes \mcL} := \omega \otimes (\mr{id}_{\mcL^{\otimes 2}}) : (\mcF_\varTheta \otimes \mcL)^{\otimes 2} \left(= \mcF_\varTheta^{\otimes 2} \otimes \mcL^{\otimes 2} \right) \rightarrow \mcN_\vartheta \otimes \mcL^{\otimes 2}
\end{align}
defines a symmetric (resp., a skew-symmetric) $\mcO_X$-bilinear map on $\mcF_\varTheta \otimes \mcL$, and  
 the resulting pair
 \begin{align} \label{EQ16}
 \nabla_{\ang, \otimes \msL}^\diamondsuit := (\nabla^\diamondsuit_{\otimes \msL}, \omega_{\otimes \mcL})
 \end{align}
 forms a $(\mr{GO}_{2\ell +1}, \vartheta \otimes \msL)$-oper (resp., a $(\mr{GSp}_{2m}, \vartheta \otimes \msL)$-oper).
It is clear that  $(\nabla^\diamondsuit_{\ang})^{\Rightarrow \spadesuit} \cong (\nabla^\diamondsuit_{\ang, \otimes \msL})^{\Rightarrow \spadesuit}$.

\subsection{Self-dual  $(\mr{GL}_{n}, \vartheta)$-opers} \label{SS300}

Let $\msF^\heartsuit := (\mcF, \nabla, \{ \mcF^j \}_{j=0}^n)$ be a $\mr{GL}_n$-oper on $X^\mr{log}/S^\mr{log}$.
For each $j=0, \cdots, n$, we regard $\mcF^{\vee j} := (\mcF/\mcF^{n-j})^\vee$ as a subbundle of  the dual $\mcF^\vee$ of $\mcF$.
The resulting collection
\begin{align}
\msF^{\heartsuit \BB} := (\mcF^\vee, \nabla^\vee, \{ \mcF^{\vee j}\}_{j=0}^n),
\end{align} 
where $\nabla^\vee$ denotes the $S^\mr{log}$-connection on $\mcF^\vee$ induced naturally from $\nabla$,  forms a $\mr{GL}_n$-oper.
If $\msF^\heartsuit$ is dormant,  then $\msF^{\heartsuit \BB}$ is dormant.

Let $\vartheta$ be as above, and write $\varTheta^\BB := \Omega^{\otimes (n-1)} \otimes \varTheta^\vee$.
Then, we have a composite isomorphism
\begin{align}
\mcT^{\otimes \frac{n (n-1)}{2}} \otimes  (\varTheta^\BB)^{\otimes n} 
&\xrightarrow{\sim}
\mcT^{\otimes \frac{n (n-1)}{2}} \otimes \mcT^{\otimes (-n(n-1))} \otimes (\varTheta^\vee)^{\otimes n} \\
&\xrightarrow{\sim}
(\mcT^{\otimes \frac{n(n-1)}{2}} \otimes \varTheta^{\otimes n})^\vee. \notag
\end{align}
Via this composite, the dual $\nabla_\vartheta^\vee$ of $\nabla_\vartheta$ can be regarded as an $S^\mr{log}$-connection on $\mcT^{\otimes \frac{n(n-1)}{2}} \otimes (\varTheta^\BB)^{\otimes n}$.
Thus, we obtain a (dormant) $n$-theta characteristic
\begin{align}
\vartheta^\BB := (\varTheta^\BB, \nabla_\vartheta^\vee)
\end{align}
of $X^\mr{log}/S^\mr{log}$.
We shall call $\vartheta^\BB$ the {\bf dual} of $\vartheta$ (cf. ~\cite[Section 5.3.2]{Wak5}).
It is immediate that $(\vartheta^\BB)^\BB$ can be identified with $\vartheta$.
Also, under the natural identification $\varTheta^\BB \otimes \mcN_\vartheta = \varTheta$, the following equality holds:
\begin{align} \label{EQ9}
\vartheta^\BB \otimes \msN_\vartheta =  \vartheta.
\end{align}

Now, let $\nabla^\diamondsuit$ be a $(\mr{GL}_n, \vartheta)$-oper on $X^\mr{log}/S^\mr{log}$.
Since 
$\mcF_{\varTheta, n}^{\vee n-1} = (\mcF_{\varTheta, n}/\mcF_{\varTheta, n}^1)^\vee = (\mcT^{\otimes (n-1)} \otimes \varTheta)^\vee = \varTheta^\BB$,
we obtain  an inclusion $\varTheta^\BB \hookrightarrow \mcF_{\varTheta, n}^\vee$.
The composite
\begin{align}
\mcF_{\varTheta^\BB, n} \left(= \mcD_{\leq n-1} \otimes \varTheta^\BB \right) \hookrightarrow \mcD \otimes \mcF_{\varTheta, n}^\vee \xrightarrow{\nabla^{\diamondsuit\vee}} \mcF_{\varTheta, n}^\vee
\end{align}
is an isomorphism.
The dual $\nabla^{\diamondsuit \vee}$ of $\nabla^\diamondsuit$ corresponds to an $S^\mr{log}$-connection 
\begin{align}
\nabla^{\diamondsuit \BB} : \mcF_{\varTheta^\BB, n} \rightarrow \Omega \otimes \mcF_{\varTheta^\BB, n}
\end{align}
on $\mcF_{\varTheta^\BB, n}$ via this isomorphism, and  it  forms 
a $(\mr{GL}_n, \vartheta^\BB)$-oper.
Since   $\nabla^{\diamondsuit \BB \BB} = \nabla^\diamondsuit$,  the resulting assignment $\nabla^\diamondsuit \mapsto \nabla^{\diamondsuit \BB}$ defines a bijection
\begin{align} \label{EQ600}
\begin{pmatrix} \text{the set of isomorphism classes of} \\ \text{$(\mr{GL}_n, \vartheta)$-opers on $X^\mr{log}/S^\mr{log}$} \end{pmatrix}
\xrightarrow{\sim}
\begin{pmatrix} \text{the set of isomorphism classes of} \\ \text{$(\mr{GL}_n, \vartheta^\BB)$-opers on $X^\mr{log}/S^\mr{log}$} \end{pmatrix}.
\end{align}

\bde \label{Def33}
A $(\mr{GL}_n, \vartheta)$-oper  $\nabla^\diamondsuit$ is said to be {\bf self-dual}
if it is isomorphic to 
the $(\mr{GL}_n, \vartheta)$-oper 
$(\nabla^{\diamondsuit \BB})_{\otimes \msN_\vartheta}$ associated with $\nabla^\diamondsuit$  under the identification
\eqref{EQ9}.
\ede

Under the assumption  that $p > n = 2\ell +1$ (resp., $p>n =2m$) for some positive integer $\ell$ (resp., $m$),  let us take  a $(\mr{GO}_{2\ell +1}, \vartheta)$-oper (resp., a $(\mr{GSp}_{2m}, \vartheta)$-oper) $\nabla_\ang^\diamondsuit := (\nabla^\diamondsuit, \omega)$  on $X^\mr{log}/S^\mr{log}$.
Then, the isomorphism $\mcF_{\varTheta, n} \xrightarrow{\sim} \mcF_{\varTheta, n}^\vee \otimes \mcN_{\vartheta}$ induced by  the $\mcO_X$-bilinear map  $\omega$ defines an isomorphism  of $(\mr{GL}_n, \vartheta)$-opers $\nabla^\diamondsuit \xrightarrow{\sim} (\nabla^{\diamondsuit \BB})_{\otimes \msN_\vartheta}$.
This means that  the underlying $(\mr{GL}_n, \vartheta)$-oper $\nabla^\diamondsuit$,  denoted occasionally  by $\nabla^\diamondsuit_{\ang \Rightarrow \emptyset}$,  is self-dual.

Conversely, let $\nabla^\diamondsuit$ be a self-dual $(\mr{GL}_n, \vartheta)$-oper on $X^\mr{log}/S^\mr{log}$.
We fix an isomorphism of $(\mr{GL}_n, \vartheta)$-opers $\eta : \nabla^\diamondsuit \xrightarrow{\sim} (\nabla^{\diamondsuit \BB})_{\otimes \msN_\vartheta}$, whose underlying morphism of vector bundles is of the form $\mcF_{\varTheta, n} \xrightarrow{\sim} \mcF_{\varTheta, n}^\vee \otimes \mcN_\vartheta$.
This isomorphism induces a nondegenerate $\mcO_X$-bilinear map
\begin{align}
\omega_{\nabla^\diamondsuit} : \mcF_{\varTheta, n}^{\otimes 2} \rightarrow \mcN_\vartheta
\end{align}
on $\mcF_{\varTheta, n}$, with respect to which  the $S^\mr{log}$-connections $\nabla^{\diamondsuit \otimes 2}$ is compatible with $\nabla_{\mcN_\vartheta}$.
Note that $\omega_{\nabla^\diamondsuit}$ depends on the choice of $\eta$, but by ~\cite[Proposition 4.25]{Wak5} it is uniquely determined up to composition with an automorphism of the flat line bundle $\msN_\vartheta$.

\bpr \label{Prop88}
Suppose that $p > n = 2\ell +1$ (resp., $p > n = 2m$) for some positive integer $\ell$ (resp., $m$).
Then, 
the pair 
\begin{align} \label{Eq228}
\nabla^\diamondsuit_{\emptyset \Rightarrow \ang} := (\nabla^\diamondsuit, \omega_{\nabla^\diamondsuit})
\end{align}
forms a $(\mr{GO}_{2\ell +1}, \vartheta)$-oper (resp., a $(\mr{GSp}_{2m}, \vartheta)$-oper).
\epr
\begin{proof}
Denote by $\omega_{\nabla^\diamondsuit, 1}$ and  $\omega_{\nabla^\diamondsuit, 2}$ the isomorphisms of $\mr{GL}_n$-opers $\nabla^{\diamondsuit \Rightarrow \heartsuit} \xrightarrow{\sim} ((\nabla^{\diamondsuit \BB})_{\otimes \msN_\vartheta})^{\Rightarrow \heartsuit}$ given by assigning $v \mapsto \omega_{\nabla^\diamondsuit} (v \otimes ( -))$ and  $v \mapsto \omega_{\nabla^\diamondsuit} ((-) \otimes  v)$, respectively,  for any local section $v \in \mcF_{\varTheta, n}$.
By  ~\cite[Proposition 4.25]{Wak5}, 
there exists a global section  $\mu$  of $\mcO_X$ satisfying the equality $\omega_{\nabla^\diamondsuit, 2} = \mu \cdot \omega_{\nabla^\diamondsuit, 1}$.
Under the assumption that  $p > n = 2\ell +1$ (resp., $p > n = 2m$), the problem is to show the equality  $\mu =1$ (resp., $\mu =-1$).
To this end,  we are free to replace $X$ with its scheme-theoretic dense open subscheme.
Hence, we assume, without loss of generality,  that 
there exists generators $\partial \in H^0 (X, \mcT)$,  $v \in H^0 (X, \varTheta)$  over $X$ of $\mcT$ and  $\varTheta$, respectively.

First,  we consider the non-resp'd assertion, i.e., the orthogonal case.
The section $v' := (\nabla^\diamondsuit)^\ell (v) \in \mcF_{\varTheta, 2\ell +1}^\ell$ is mapped to a generator of $\mcF_{\varTheta, 2\ell +1}^\ell/\mcF_{\varTheta, 2\ell +1}^{\ell +1}$ via the natural quotient, and we have $\omega_{\nabla^\diamondsuit} (v' \otimes v') \in H^0 (X, \mcO_X^\times)$. 
Observe  that 
\begin{align}
 \omega_{\nabla^\diamondsuit} (v' \otimes v') =(\omega_{\nabla^\diamondsuit, 2} (v')) (v') =  (\mu \cdot \omega_{\nabla^\diamondsuit, 1} (v')) v' =\mu \cdot \omega_{\nabla^\diamondsuit} (v' \otimes v').
\end{align}
Since  $\omega_{\nabla^\diamondsuit} (v' \otimes v')$ is invertible, the resulting  equality  $\omega_{\nabla^\diamondsuit} (v' \otimes v') = \mu \cdot \omega_{\nabla^\diamondsuit} (v' \otimes v')$ forces $\mu =1$,
as desired.

Next, we prove the resp'd assertion, i.e., the symplectic case. 
Write $v'' := (\nabla^\diamondsuit)^m (v) \in \mcF_{\varTheta, 2m}^m$.
Then,  the sections $v'' \in\mcF_{\varTheta, 2m}^m$, $\nabla^\diamondsuit (v'')  \in \mcF_{\varTheta, 2m}^{m-1}$ are
 mapped, via the natural quotients,  to generators  of  $ \mcF_{\varTheta, 2m}^{m}/ \mcF_{\varTheta, 2m}^{m+1}$  and $ \mcF_{\varTheta, 2m}^{m-1}/ \mcF_{\varTheta, 2m}^{m}$, respectively.
It follows that $\omega_{\nabla^\diamondsuit} (\nabla^{\diamondsuit}(v'') \otimes v'') \in H^0 (X, \mcO_X^\times)$.
Since $(\mcF_{\varTheta, 2m}^m)^\perp = \mcF_{\varTheta, 2m}^m$,  the following chain of equalities holds:
\begin{align}
\omega_\nabla (\nabla^\diamondsuit (v'') \otimes v'') + \omega_{\nabla} (v'' \otimes \nabla^\diamondsuit (v'')) 
& = \omega_{\nabla^\diamondsuit} ((\nabla^{\diamondsuit})^{\otimes 2}(v'' \otimes v''))  \\
& = \nabla_{\mcN_\vartheta}(\omega_\nabla (v'' \otimes v'')) \notag \\
& = \nabla_{\mcN_\vartheta} (0) \notag \\
&= 0. \notag
\end{align}
Hence,  we have 
\begin{align}
 \omega_\nabla (\nabla^\diamondsuit (v'') \otimes v'') & =(\omega_{\nabla, 2} (v'')) (\nabla^\diamondsuit (v'')) \\
 & =  (\mu \cdot \omega_{\nabla, 1} (v'')) (\nabla^\diamondsuit(v'')) \notag \\
 & =\mu \cdot \omega_\nabla (v'' \otimes \nabla^\diamondsuit ( v'')) \notag \\
 &=  - \mu \cdot \omega_\nabla (\nabla^\diamondsuit (v'') \otimes v''). \notag
\end{align}
Since  $ \omega_\nabla (\nabla^\diamondsuit (v'') \otimes v'')$ is invertible, the resulting equality $\omega_\nabla (\nabla^\diamondsuit (v'') \otimes v'') = - \mu \cdot \omega_\nabla (\nabla^\diamondsuit (v'') \otimes v'')$ forces 
  $\mu = -1$.
  This  completes  the proof of the assertion.
\end{proof}

By the above proposition,  the following statement follows  immediately; it refines  ~\cite[Proposition 5.10]{Wak5}.

\bco \label{Prop89}
Suppose that $p >n = 2\ell +1$ (resp., $p> n = 2m$) for some positive integer $\ell$ (resp., $m$).
Then, the assignments $\nabla^\diamondsuit_{\ang} \mapsto  \nabla^\diamondsuit_{\ang \Rightarrow \emptyset}$ and  $\nabla^\diamondsuit \mapsto \nabla^\diamondsuit_{\emptyset \Rightarrow \ang}$ (cf. \eqref{Eq228})  together  define a bijection of sets
\begin{align} \label{EQ1}
\begin{pmatrix} \text{the set of isomorphism classes of} \\ \text{$(\mr{GO}_{2 \ell +1}, \vartheta)$-opers} \\ \text{(resp., $(\mr{GSp}_{2 m}, \vartheta)$-opers) on $X^\mr{log}/S^\mr{log}$} \end{pmatrix}
\xrightarrow{\sim}
\begin{pmatrix} \text{the of isomorphism classes of} \\ \text{self-dual $(\mr{GL}_{n}, \vartheta)$-opers} \\ \text{ on $X^\mr{log}/S^\mr{log}$}\end{pmatrix}.
\end{align}
Moreover, this bijection  restricts to a bijection of sets
\begin{align} \label{EQ2}
\begin{pmatrix} \text{the set of isomorphism classes of} \\ \text{dormant  $(\mr{GO}_{2 \ell +1}, \vartheta)$-opers} \\ \text{(resp., dormant $(\mr{GSp}_{2 m}, \vartheta)$-opers)} \\ \text{on $X^\mr{log}/S^\mr{log}$} \end{pmatrix}
\xrightarrow{\sim}
\begin{pmatrix} \text{the set of isomorphism classes of } \\ \text{self-dual dormant  $(\mr{GL}_{n}, \vartheta)$-opers} \\ \text{on $X^\mr{log}/S^\mr{log}$} \end{pmatrix}.
\end{align}
Finally, the formations of these bijections commute with  base-changes to fs log schemes over $S^\mr{log}$.
\eco

\subsection{Data of radii}
   \label{SS600}

In the rest of this section, we 
assume that $X^\mr{log}/S^\mr{log}$ arises from an $r$-pointed stable curve  $\msX := (f : X \rightarrow S, \{ \sigma_i \}_{i=1}^r)$ of genus $g$ over the underlying scheme $S$ of $S^\mr{log}$.
Unless otherwise stated,  we further assume  that $r > 0$.

Denote by $\widetilde{\Xi}_{n}$ the collection of all subsets of $\mbF_p$ with cardinality $n$.
We say that two elements $\{ e_{1,1}, \cdots, e_{1, n} \}$, $\{ e_{2,1}, \cdots, e_{2, n} \}$ of $\widetilde{\Xi}_n$ are {\bf equivalent}
if there exists an element $c \in \mbF_p$ satisfying $\{ e_{1,1}, \cdots, e_{1, n} \} = \{ e_{2,1} +c, \cdots, e_{2, n} +c \}$.
This binary relation in  $\widetilde{\Xi}_{n}$ defines an equivalence relation.
Thus,  we obtain the quotient set
\begin{align} \label{EQ10}
\Xi_n
\end{align}
of  $\widetilde{\Xi}_{n}$  by this equivalence relation, as  well as  
the natural projection  $\pi : \widetilde{\Xi}_n \twoheadrightarrow \Xi_n$.

Next, let  $(-)^\BB$ denote the involution on $\widetilde{\Xi}_n$ 
given by assigning $\{ e_{1}, \cdots, e_n\}^\BB :=  \{-e_1, \cdots, -e_n\}$ (for $e_1, \cdots, e_n \in \mbF_p$).
This induces an involution on $\Xi_n$, which will be written by the same notation $(-)^\BB$.
Write 
\begin{align} \label{EQ29}
\widetilde{\Xi}_{n, \mr{sym} } \ \left(\text{resp.,} \  \Xi_{n, \mr{sym}}  \right)
\end{align}
for  the subset of $\widetilde{\Xi}_n$ (resp.,  $\Xi_n$) consisting of elements 
invariant under $(-)^\BB$. 

For an element $e := \{ e_1, \cdots, e_n \}$ of $\widetilde{\Xi}_n$, we shall write $e^\BBB := \mbF_p \setminus e \in \widetilde{\Xi}_{p-n}$.
The assignments $e \mapsto e^\BBB$ and $e \mapsto e^\BBBB := e^{\BBB \BB}$, respectively,  determine  bijections $\widetilde{\Xi}_n \xrightarrow{\sim} \widetilde{\Xi}_{p-n}$, and they induce  well-defined bijections
\begin{align} \label{EQ304}
(-)^\BBB :  \Xi_n \xrightarrow{\sim} \Xi_{p-n} \ \ \ \text{and} \ \ \ (-)^\BBBB :  \Xi_n \xrightarrow{\sim} \Xi_{p-n},
\end{align}
respectively, satisfying 
 $(-)^{\BBB\BBB} = (-)^{\BBBB \BBBB} = \mr{id}_{\Xi_n}$.
Moreover, if $e$ belongs to $\widetilde{\Xi}_{n, \mr{sym}}$, then the element $e^\BBB$ belongs to $\widetilde{\Xi}_{p-n, \mr{sym}}$ and coincides with $e^\BBBB$.
The bijections in   \eqref{EQ304} restricts to a common bijection 
 \begin{align} \label{EQ28e}
 (-)^\BBBB : \Xi_{n, \mr{sym}} \xrightarrow{\sim} \Xi_{p-n, \mr{sym}}.
 \end{align}

\subsection{Radii  of dormant opers} \label{SS6002}

Let $\vartheta := (\varTheta, \nabla_\vartheta)$ be a dormant $n$-theta characteristic  of $X^\mr{log}/S^\mr{log}$ and
$\nabla^\diamondsuit$ a dormant $(\mr{GL}_n, \vartheta)$-oper.
Choose  an element $i$ of $\{1, \cdots, n \}$.
The {\it residue matrix} (or, the {\it monodromy operator}, in the sense of ~\cite[Definition 4.42]{Wak5}) of $\nabla^\diamondsuit$ at the $i$-th marked point $\sigma_i$ is the element of $\mr{End}_{\mcO_S} (\sigma_i^*(\mcF_{\varTheta, n}))$ defined to be the composite
\begin{align}
\mu_i (\nabla^\diamondsuit) : \sigma_i^* (\mcF_{\varTheta, n}) & \xrightarrow{\sigma_i(\nabla^\diamondsuit)} \sigma_i^*(\Omega \otimes \mcF_{\varTheta, n}) \\
& \xrightarrow{\sim} \sigma_i^*(\Omega) \otimes \sigma_i^* (\mcF_{\varTheta, n}) \notag \\
& \xrightarrow{\mr{Res}_i \otimes\mr{id}}
\left( \mcO_S \otimes \sigma_i^*(\mcF_{\varTheta, n}) = \right) \sigma_i^*(\mcF_{\varTheta, n}),
\end{align}
where $\mr{Res}_i$ denotes the usual residue isomorphism $\sigma_i^*(\Omega) \xrightarrow{\sim} \mcO_S$.
It follows from ~\cite[Proposition 8.4, (ii)]{Wak5}
that there exists a unique element 
\begin{align} \label{EQ11}
 e_i (\nabla^\diamondsuit) := \{ e_{i, 1}, \cdots, e_{i, n} \}
 \end{align}
of $\widetilde{\Xi}_n$
 defining the eigenvalues of $\mu_i(\nabla^\diamondsuit)$ (counted with multiplicity).
This element $e_i (\nabla^\diamondsuit)$ is called the {\bf exponent} of $\nabla^\diamondsuit$ at $\sigma_i$.
By the definition of $\nabla^{\diamondsuit \BB}$, the equality  $e_i (\nabla^{\diamondsuit \BB}) = e_i (\nabla^\diamondsuit)^\BB$ holds.

For an $r$-tuple $e := (e_i)_{i=1}^r \in \widetilde{\Xi}_n^r\left(:= \widetilde{\Xi}_n \times \cdots \times \widetilde{\Xi}_n \right)$, we say that $\nabla^\diamondsuit$ is {\bf of exponents $e$} if $e_i = e_i (\nabla^\diamondsuit)$ for every $i=1, \cdots, r$. When $r =0$, any dormant $(\mr{GL}_n, \vartheta)$-oper is said to be {\bf of exponents $\emptyset$}.

Given a  dormant $\mfs \mfl_n$-oper $\msE^\spadesuit$ on $\msX$, we associate to each marked point $\sigma_i$
  an element 
\begin{align}
\rho_i (\msE^\spadesuit) := \pi (e_{i} (\nabla^\diamondsuit)) \in \Xi_n
\end{align}
defined to be the image, via $\pi$, of the exponent $e_i (\nabla^\diamondsuit)$  at $\sigma_i$ of the corresponding dormant $(\mr{GL}_n, \vartheta)$-oper $\nabla^\diamondsuit$.
This element  does not depend on the choice of $\vartheta$;
in other words, it  depends only on the isomorphism class of $\msE^\spadesuit$.
We refer  to  $\rho_i (\msE^\spadesuit)$ as the {\bf radius} of $\msE^\spadesuit$ at $\sigma_i$.

For $\rho := (\rho_i)_{i=1}^r \in \Xi_n^r$,
we say that $\msE^\spadesuit$ is {\bf of radii $\rho$} if $\rho_i = \rho_i ({\msE^\spadesuit})$ for every $i =1, \cdots, r$.
When $r = 0$, any dormant $\mfs \mfl_n$-oper is said to be {\bf of radii $\emptyset$}.
The bijection \eqref{EQ600} induces, via \eqref{EQ33},  a bijection
\begin{align} \label{EQ601}
\begin{pmatrix} \text{the set of isomorphism classes} \\ \text{of dormant $\mfs \mfl_n$-opers} \\ \text{of radii $\rho$ on $\msX$}\end{pmatrix}
\xrightarrow{\sim}
\begin{pmatrix} \text{the set of isomorphism classes} \\ \text{of dormant $\mfs \mfl_n$-opers} \\ \text{of radii $\rho^\BB$ on $\msX$}\end{pmatrix},
\end{align}
where $\rho^\BB := (\rho_i^\BB)_{i=1}^r$.

Now, suppose that $p > n = 2\ell +1$ (resp., $p > n=2m$) for some positive integer $\ell$ (resp., $m$), and let
  $\nabla_{\ang}^\diamondsuit := (\nabla^\diamondsuit, \omega)$ be a dormant $(\mr{GO}_{2\ell +1}, \vartheta)$-oper (resp., a dormant $(\mr{GSp}_{2m}, \vartheta)$-oper).
For each $i=1, \cdots, r$,
we obtain an element $e_i (\nabla^\diamondsuit) \in \widetilde{\Xi}_n$.
Recall that taking the dual of a logarithmic connection transforms its residue matrix into $(-1)$ times its transpose.
 Thus,  the self-duality for $\nabla^\diamondsuit$  implies that $e_i (\nabla^\diamondsuit)$ lies in $\widetilde{\Xi}_{n, \mr{sym}}$.
 
 Also,  let $\msE^\spadesuit$ be a dormant $\mfs \mfo_{2\ell +1}$-oper  (resp.,  a dormant $\mfs \mfp_{2m}$-oper) on $\msX$.
 It corresponds to a dormant $(\mr{GO}_{2\ell +1}, \vartheta)$-oper (resp., a dormant $(\mr{GSp}_{2m}, \vartheta)$-oper) $\nabla_{\ang}^\diamondsuit$ via  \eqref{EQ5}.
 Then, the resulting element 
\begin{align} \label{EQ13}
\rho_i (\msE^\spadesuit) :=  \pi (e_i (\nabla^\diamondsuit))
\end{align}
does not depend on the choice of $\vartheta$ and lies in $\Xi_{n, \mr{sym}}$.
We refer to $\rho_i (\msE^\spadesuit)$ as the {\bf radii} of $\msE^\spadesuit$ at $\sigma_i$.
For $\rho := (\rho_i )_{i=1}^r \in \Xi_{n, \mr{sym}}^{r}$,
we  say that a dormant $\mfs \mfo_{2\ell +1}$-oper (resp., a dormant $\mfs \mfp_{2m}$-oper) $\msE^\spadesuit$ is {\bf of radii $\rho$} if $\rho_i = \rho_i ({\msE^\spadesuit})$ for every $i =1, \cdots, r$.
When $r = 0$, any dormant $\mfs \mfo_{2\ell +1}$-oper (resp., dormant $\mfs \mfp_{2m}$-oper) is said to be {\bf of radii $\emptyset$}.

\vspace{10mm}
\section{Duality between  dormant $\mfs \mfo_{2\ell +1}$- and $\mfs \mfp_{2m}$-opers} \label{S20}

In this section, we establish a duality between dormant $\mfs \mfo_{2\ell +1}$- and $\mfs \mfp_{2m}$-opers under the condition that $p-1 =2(\ell + m) +1$ (cf. Theorems \ref{Prop461} and \ref{QW1199}).
This duality is obtained by restricting the  classical duality  between dormant  $\mfs \mfl_n$- and $\mfs \mfl_{p-n}$-opers proved in ~\cite{Wak2}.
As an application, we achieve a detailed understanding of the geometric structure of the moduli spaces parametrizing  dormant $\mfs \mfo_{2\ell +1}$- and $\mfs \mfp_{2m}$-opers (cf. Corollary \ref{Cor21}).

\subsection{Canonical dormant $\mfs \mfl_{p}$-oper} \label{SS046}

As before,
let $S^\mr{log}$ be an fs log scheme over $k$ and $f^\mr{log} : X^\mr{log} \rightarrow S^\mr{log}$ a log curve over $S^\mr{log}$.
We continue to use the notation introduced in the previous section for 
$X^\mr{log}/S^\mr{log}$.

Let $\mcL$ be a line bundle on $X$.
Observe that 
 \begin{align} \label{EQ14}
 \mcT^{\otimes \frac{p (p-1)}{2}} \otimes \mcL^{\otimes p} \cong (\mcT^{\otimes \frac{p -1}{2}}\otimes \mcL)^{\otimes p}
\cong F_{X/S}^* (\mcW), 
 \end{align}
 where  $\mcW$ denotes the vector bundle on $X^{(1)}$  defined as the base-change of $\mcT^{\otimes \frac{p -1}{2}}\otimes \mcL$ via the absolute Frobenius morphism $S \rightarrow S$ of $S$.
Just as in  the non-logarithmic case recalled in Section \ref{SS99},
$F_{X/S}^* (\mcW)$ carries  a canonical $S^\mr{log}$-connection with vanishing $p$-curvature;
 the corresponding $S^\mr{log}$-connection on  $\mcT^{\otimes \frac{p (p-1)}{2}} \otimes \mcL^{\otimes p}$ via \eqref{EQ14}
 is denoted by $\nabla_{\vartheta_\mcL}$. 
Thus, we obtain a dormant $p$-theta characteristic
\begin{align} \label{EQ100}
\vartheta_\mcL := (\mcL, \nabla_{\vartheta_\mcL})
\end{align}
of $X^\mr{log}/S^\mr{log}$.

Let ${^p}\psi$ denote the $\mcO_X$-linear morphism $\mcT^{\otimes p} \rightarrow \mcD$ determined  by $\partial^{\otimes p} \mapsto \partial^p - \partial^{[p]}$ for any local section $\partial \in \mcT$.
Note that  the composite
\begin{align} \label{Eq222}
\mcF_{\mcL, p} \left(= \mcD_{\leq p -1} \otimes \mcL \right) \xrightarrow{\mr{inclusion}} \mcD \otimes \mcL \twoheadrightarrow (\mcD \otimes \mcL) / \langle \mr{Im}({^p}\psi \otimes \mr{id}_\mcL)\rangle
\end{align}
is an isomorphism,
where 
$(\mcD \otimes \mcL) / \langle \mr{Im}({^p}\psi \otimes \mr{id}_\mcL)\rangle$ denotes 
the quotient  of the left $\mcD$-module $\mcD \otimes \mcL$ by the $\mcD$-submodule generated by the image of ${^p}\psi \otimes \mr{id}_\mcL : \mcT_\X^{\otimes p} \otimes \mcL \rightarrow \mcD \otimes \mcL$.
 The natural  $\mcD$-module structure on $(\mcD \otimes \mcL) / \langle \mr{Im}({^p}\psi \otimes \mr{id}_\mcL)\rangle$
corresponds to  an $S^\mr{log}$-connection $\nabla^\diamondsuit_{\mcL}$ on $\mcF_{\mcL, p}$  via this composite isomorphism.
 The collection of data
 \begin{align} \label{Eq300}
 \msF^\heartsuit_\mcL := (\mcF_{\mcL, p}, \nabla^\diamondsuit_{\mcL}, \{ \mcF^j_{\mcL, p} \}_{j=0}^p)
 \end{align}
  forms a dormant 
  $\mr{GL}_{p}$-oper on $X^\mr{log}/S^\mr{log}$, and moreover $\nabla_{\mcL}^\diamondsuit$ defines a dormant $(\mr{GL}_{p}, \vartheta_\mcL)$-oper.
  (If $X/S$ is a geometrically connected,  proper and smooth curve, then it follows from ~\cite[Proposition 5.1.1]{Wak2} that the dormant $\mr{GL}_p$-oper $\mcA^\heartsuit$ introduced in \eqref{EQ101} is isomorphic to $\msF^\heartsuit_{\Omega^{\otimes (p-1)}}$.)
 In particular, it corresponds, via \eqref{EQ33}, to a dormant $\mfs \mfl_{p}$-oper 
 \begin{align}
 \msE^\spadesuit_\mcL
 \end{align}
 on $X^\mr{log}/S^\mr{log}$.
  Since $\Xi_p$ is a singleton consisting of the element 
 \begin{align} \label{EQ301}
 \rho_\mr{full} := \pi (\mbF_p),
 \end{align}
the radius  of $\msE^\spadesuit_\mcL$ at every marked point coincides with $\rho_\mr{full}$.

\subsection{Duality between  dormant $\mfs \mfl_n$- and $\mfs \mfl_{p-n}$-opers I} \label{SS11}

Let $n$ be an integer with $1 < n < p$.
In this subsection, we recall from ~\cite{Wak2} the duality between  dormant $\mfs \mfl_n$- and $\mfs \mfl_{p-n}$-opers.

Let us fix a dormant $n$-theta characteristic $\vartheta := (\varTheta, \nabla_\vartheta)$ of $X^\mr{log}/S^\mr{log}$.
Write $\varTheta^\BBB := \mcT^{\otimes n} \otimes \varTheta \left(= (\varTheta^\BB)^\vee \right)$.
Then,  we obtain  a sequence of natural isomorphisms
\begin{align}
\mcT^{\otimes \frac{(p-n)(p-n-1)}{2}} \otimes (\varTheta^\BBB)^{\otimes (p-n)}
&\xrightarrow{\sim}
\mcT^{\otimes \frac{(p-n)(p+n-1)}{2}} \otimes \varTheta^{\otimes (p-n)} \\
&\xrightarrow{\sim} (\mcT^{\otimes \frac{p (p-1)}{2}} \otimes \varTheta^{\otimes p}) \otimes (\mcT^{\otimes \frac{n(n-1)}{2}} \otimes \varTheta^{\otimes n})^\vee. \notag
\end{align}
Then, $\nabla_{\vartheta_\mcL} \otimes \nabla_\vartheta^\vee$ corresponds to  an $S^\mr{log}$-connection $\nabla_{\vartheta^\BBB}$ on $\mcT^{\otimes \frac{(p-n)(p-n-1)}{2}} \otimes (\varTheta^\BBB)^{\otimes (p-n)}$.
It follows that the pair
\begin{align}
\vartheta^\BBB := (\varTheta^\BBB, \nabla_{\vartheta^\BBB})
\end{align}
forms a dormant $(p-n)$-theta characteristic.

On the other hand, 
the canonical $S^\mr{log}$-connection $\nabla_{\mcT^{\otimes p}}$  on $\mcT^{\otimes p}$ defined as in the discussion of Section \ref{SS046} yields   a flat line bundle $\msT :=(\mcT^{\otimes p}, \nabla_{\mcT^{\otimes p}})$.
 Then, we have
\begin{align} \label{EQ105}
(\vartheta^\BBB)^\BBB = \vartheta \otimes \msT
\end{align}
under the natural identification $\left( (\varTheta^\BBB)^\BBB = \right) \mcT^{\otimes (p-n)} \otimes (\mcT^{\otimes n} \otimes \varTheta) = \mcT^{\otimes p} \otimes \varTheta$.

Next, let $\nabla^\diamondsuit$ be a dormant $(\mr{GL}_n, \vartheta)$-oper on $X^\mr{log}/S^\mr{log}$.
The inclusion $\varTheta \hookrightarrow \mcF_{\varTheta, n}$ extends uniquely  to a $\mcD$-linear morphism $\mcD \otimes \varTheta \rightarrow \mcF_{\varTheta, n}$, where $\mcF_{\varTheta, n}$ is equipped with  the $\mcD$-action corresponding to $\nabla^\diamondsuit$.
Since $\nabla^\diamondsuit$ has vanishing $p$-curvature, 
this morphism factors through the quotient $\mcD \otimes \varTheta \twoheadrightarrow \mcD \otimes \varTheta / \langle \mr{Im} ({^p}\psi \otimes \mr{id}_\varTheta)\rangle$.
Therefore, under the identification $\mcF_{\varTheta, p} = \mcD \otimes \varTheta / \langle \mr{Im} ({^p}\psi \otimes \mr{id}_\varTheta)\rangle$ given by \eqref{Eq222} for $\mcL = \varTheta$, we obtain a morphism of flat bundles
\begin{align}
\nu_{\nabla^\diamondsuit} : (\mcF_{\varTheta, p}, \nabla_{\varTheta}^\diamondsuit) \rightarrow (\mcF_{\varTheta, n}, \nabla^\diamondsuit).
\end{align}

The composite
$\mcF_{\varTheta, p}^{p-n} \hookrightarrow \mcF_{\varTheta, p} \xrightarrow{\nu_{\nabla^\diamondsuit}} \mcF_{\varTheta, n}$ is an isomorphism.
This implies the surjectivity of 
$\nu_{\nabla^\diamondsuit}$, so  the composite 
\begin{align} \label{Eq223}
\mr{Ker}(\nu_{\nabla^\diamondsuit}) \hookrightarrow \mcF_{\varTheta, p} \twoheadrightarrow \mcF_{\varTheta, p} / \mcF_{\varTheta, p}^{p-n}
\end{align}
is an isomorphism.
The line subbundle $\mcF_{\varTheta, p}^{p-n-1}/\mcF_{\varTheta, p}^{p-n}$ of  $\mcF_{\varTheta, p} / \mcF_{\varTheta, p}^{p-n}$ 
can be naturally  identified with $\varTheta^\BBB$.
Hence, by passing through  \eqref{Eq223}, 
we regard $\varTheta^\BBB$  as a line subbundle of $\mr{Ker}(\nu_{\nabla^\diamondsuit})$.
The inclusion $\varTheta^\BBB \hookrightarrow \mr{Ker}(\nu_{\nabla^\diamondsuit})$ extends to a  morphism of $\mcD$-modules $\mcD \otimes \varTheta^\BBB \rightarrow \mr{Ker}(\nu_{\nabla^\diamondsuit})$, where   $\mr{Ker}(\nu_{\nabla^\diamondsuit})$ is equipped with the $\mcD$-action  obtained by restricting  the $\mcD$-action on $\mcF_{\varTheta, p}$ corresponding to  $\nabla_{\varTheta}^\diamondsuit$.
Then, the composite
\begin{align} \label{Eq460}
\mcF_{\varTheta^\BBB, p-n}  \xrightarrow{\mr{inclusion}} \mcD \otimes \varTheta^\BBB \xrightarrow{} \mr{Ker}(\nu_{\nabla^\diamondsuit})
\end{align}
is verified to be  an isomorphism.

Denote by 
\begin{align} \label{EQ209}
\nabla^{\diamondsuit \BBB}
\end{align}
 the $S^\mr{log}$-connection on $\mcF_{\varTheta^\BBB, p-n}$ corresponding to that on $ \mr{Ker}(\nu_{\nabla^\diamondsuit})$ via \eqref{Eq460}.
In particular, we obtain  the following short exact sequence of flat bundles
\begin{align} \label{EQ29}
0 \rightarrow (\mcF_{\varTheta^\BBB, p-n}, \nabla^{\diamondsuit \BBB}) \rightarrow (\mcF_{\varTheta}, \nabla_{\varTheta}^\diamondsuit)
\rightarrow (\mcF_{\varTheta, n}, \nabla^\diamondsuit)
\rightarrow 0.
\end{align}
By construction,  $\nabla^{\diamondsuit \BBB}$ forms a dormant $(\mr{GL}_{p-n}, \vartheta^\BBB)$-oper on $X^\mr{log}/S^\mr{log}$.
Moreover, by applying the same construction 
 to
  $\nabla^{\diamondsuit \BBB}$ itself, we obtain a dormant  $(\mr{GL}_n, \vartheta \otimes \msT)$-oper $(\nabla^{\diamondsuit \BBB})^\BBB$ under the identification $(\vartheta^\BBB)^\BBB = \vartheta \otimes \msT$ in  \eqref{EQ105}.

\subsection{Duality between  dormant $\mfs \mfl_n$- and $\mfs \mfl_{p-n}$-opers II} \label{SS164}

In what follows, we describe  the relationship between $\nabla^\diamondsuit$ and $(\nabla^{\diamondsuit \BBB})^\BBB$.
The tensor product $\mcT^{\otimes p} \otimes \mcD$ admits a structure of left $\mcD$-module
\begin{align}
\mcD \otimes (\mcT^{\otimes p} \otimes \mcD) \rightarrow \mcT^{\otimes p} \otimes \mcD
\end{align}
arising from both $\nabla_{\mcT^{\otimes p}}$ and the natural left $\mcD$-action on $\mcD$ itself.
This morphism restricts to an isomorphism
\begin{align}
\label{Eq502}
\mcD_{\leq n-1} \otimes \mcT^{\otimes p} \left(= \mcD_{\leq n-1} \otimes (\mcT^{\otimes p} \otimes \mcD_{\leq 0}) \right)
\xrightarrow{\sim} \mcT^{\otimes p} \otimes \mcD_{\leq n-1}.
\end{align}
Tensoring  this isomorphism with the identity morphism of $\varTheta$  yields  an isomorphism
\begin{align} \label{EQ556}
\mcF_{\mcT^{\otimes p} \otimes \varTheta, n} \left(= \mcD_{\leq n-1} \otimes \mcT^{\otimes p} \otimes \varTheta \right) \xrightarrow{\sim} \left(\mcT^{\otimes p} \otimes \mcD_{\leq n-1} \otimes \varTheta = \right) \mcT^{\otimes p} \otimes \mcF_{\varTheta, n}.
\end{align}

\bpr \label{Prop456}
Under the natural identification $\mcF_{\mcT^{\otimes p} \otimes \varTheta, n} = \mcT^{\otimes p} \otimes \mcF_{\varTheta, n}$ given by  \eqref{EQ556}, 
the $(\mr{GL}_n, \vartheta \otimes \msT)$-oper $(\nabla^{\diamondsuit \BBB})^\BBB$ is isomorphic to   $\nabla^\diamondsuit_{\otimes \msT}$.
In particular,  the $\mfs \mfl_n$-oper $((\nabla^{\diamondsuit \BBB})^{\BBB})^{\Rightarrow \spadesuit}$ is isomorphic to  $\nabla^{\diamondsuit \Rightarrow \spadesuit}$.
\epr
\begin{proof}
The assertion follows from the various definitions involved.
\end{proof}

Consequently, the assignment $\nabla^\diamondsuit \mapsto \nabla^{\diamondsuit \BBB}$ defines a bijection of sets
\begin{align} \label{EQ110}
\begin{pmatrix}\text{the set of isomorphism classes  of} \\ \text{dormant $(\mr{GL}_n, \vartheta)$-opers} \\ \text{on $X^\mr{log}/S^\mr{log}$}\end{pmatrix}
\cong 
\begin{pmatrix}\text{the set of isomorphism classes of} \\ \text{dormant $(\mr{GL}_{p-n}, \vartheta^\BBB)$-opers} \\ \text{on $X^\mr{log}/S^\mr{log}$}\end{pmatrix},
\end{align}
which was already established in ~\cite[Theorem 4.3.1]{Wak2}.
(Strictly speaking,
this bijection coincides with the duality constructed in 
that reference,
except for the difference in the operation of applying $(-)^\BB$.)
Moreover,   it induces, via  \eqref{EQ33}, a bijection of sets
\begin{align} \label{EQ120}
\begin{pmatrix}\text{the  set of isomorphism classes  of}
 \\ \text{dormant $\mfs \mfl_n$-opers on $X^\mr{log}/S^\mr{log}$}
 \end{pmatrix}
\cong 
\begin{pmatrix}\text{the set of isomorphism classes of}
 \\ \text{dormant $\mfs \mfl_{p-n}$-opers on $X^\mr{log}/S^\mr{log}$}
 \end{pmatrix}.
\end{align}

Suppose further that 
$X^\mr{log}/S^\mr{log}$ 
arises from an $r$-pointed stable curve $\msX := (f : X \rightarrow S, \{ \sigma_i \}_{i=1}^r)$ of genus $g$ over the underlying scheme $S$ of $S^\mr{log}$.
Let $\nabla^\diamondsuit$  be a dormant $(\mr{GL}_n, \vartheta)$-oper.
Since the $(\mr{GL}_p, \vartheta_\mcL)$-oper $\nabla_\mcL^\diamondsuit$ is of exponents $(\mbF_p, \cdots, \mbF_p)$,
the exactness of the sequence \eqref{EQ29} implies that 
the exponent $e_i (\nabla^{\diamondsuit \BBB})$ coincides with $e_i (\nabla^{\diamondsuit})^\BBB$ for every $i =1, \cdots, r$.
Hence, for $e := (e_i)_{i=1}^r \in \widetilde{\Xi}_{n}^r$, the bijection \eqref{EQ110} restricts to a bijection
\begin{align} \label{EQ116}
\begin{pmatrix}\text{the set of isomorphism classes  of} \\ \text{dormant $(\mr{GL}_n, \vartheta)$-opers} \\ \text{of exponents $e$ on $\msX$}\end{pmatrix}
\cong 
\begin{pmatrix}\text{the set of isomorphism classes of} \\ \text{dormant $(\mr{GL}_{p-n}, \vartheta^\BBB)$-opers} \\ \text{of exponents $e^\BBB$ on $\msX$}\end{pmatrix},
\end{align}
where $e^\BBB := (e^\BBB_1, \cdots, e^\BBB_r)$.
Moreover, for $\rho := (\rho_i)_{i=1}^r \in \Xi_{n}^r$,
the bijection \eqref{EQ120} restricts to a bijection
\begin{align} \label{EQ126}
\begin{pmatrix}\text{the  set of isomorphism classes  of}
 \\ \text{dormant $\mfs \mfl_n$-opers} \\ \text{of radii $\rho$ on $\msX$}
 \end{pmatrix}
\cong 
\begin{pmatrix}\text{the set of isomorphism classes of}
 \\ \text{dormant $\mfs \mfl_{p-n}$-opers} \\ \text{of radii $\rho^\BBB$ on $\msX$}
 \end{pmatrix},
\end{align}
where $\rho^\BBB := (\rho_1^\BBB, \cdots, \rho_r^\BBB)$.

\subsection{Projective connections on a log curve I} \label{SS0f46}

Let $n$ be an integer with $1 \leq n < p$.
For each $j \in \mbZ_{\geq 0}$, we set
\begin{align}
{^\dagger}\mcD_{\leq j, \varTheta} := \mcH om_{\mcO_X} (\varTheta^\vee, (\Omega^{\otimes n} \otimes \varTheta^\vee) \otimes \mcD_{\leq  j}),
\  \ \ {^\ddagger}\mcD_{\leq j, \varTheta} := \mcH om_{\mcO_X} (\mcT^{\otimes n}\otimes \varTheta, \mcD_{\leq j} \otimes \varTheta).
\end{align}
As explained in ~\cite[Remark 4.12]{Wak5},
each  section of ${^\dagger}\mcD_{\leq j, \varTheta}$  can be regarded, under the assumption that $j <p$,  a linear differential operator from $\varTheta^\vee$ to $\Omega^{\otimes n} \otimes \varTheta^\vee$ in the usual sense.
Note  that  there is  a composite of natural isomorphisms
\begin{align}
\beta_j :   {^\dagger}\mcD_{\leq j, \varTheta} \xrightarrow{\sim} \Omega^{\otimes n} \otimes \varTheta^\vee \otimes \mcD_{\leq j} \otimes \varTheta \xrightarrow{\sim} {^\ddagger}\mcD_{\leq j, \varTheta}.
\end{align}
Also, since   $\mcD_{\leq  j}/\mcD_{\leq j-1}$ is identified with $\mcT^{\otimes j}$,
we obtain a composite
\begin{align}
\Sigma : {^\dagger}\mcD_{\leq j, \varTheta} \twoheadrightarrow {^\dagger}\mcD_{\leq j, \varTheta}/{^\dagger}\mcD_{\leq j-1, \varTheta}
\xrightarrow{\sim} \Omega^{\otimes (n-j)}.
\end{align}
If $D$ is a local section of  ${^\dagger}\mcD_{\leq j, \varTheta}$, then its image $\Sigma (D)$ under $\Sigma$ is called the {\bf principal symbol} of $D$.

Let $D^\clubsuit : \varTheta^\vee \rightarrow \Omega^{\otimes n} \otimes \varTheta^\vee \otimes \mcD_{\leq n}$ be
a global section of  ${^\dagger}\mcD_{\leq n, \varTheta}$ with $\Sigma (D^\clubsuit)  =1 \in H^0 (X, \mcO_X)$.
It induces  a left $\mcD$-module $\mcD\otimes \varTheta / \langle \mr{Im} (\beta_n (D^\clubsuit)) \rangle$, i.e., the quotient of $\mcD \otimes \varTheta$ by the $\mcD$-submodule generated by the image of the morphism 
\begin{align}
\beta_n (D^\clubsuit) : \mcT^{\otimes n} \otimes \varTheta \rightarrow \mcD_{\leq n} \otimes \varTheta \left(\subseteq \mcD \otimes \varTheta \right).
\end{align}
Since $\Sigma (D^\clubsuit) =1$ and $n < p$, the composite
\begin{align} \label{EQ20}
\mcF_{\varTheta, n} \xrightarrow{\mr{inclusion}} \mcD\otimes \varTheta \xrightarrow{\mr{quotient}} \mcD\otimes \varTheta /  \langle \mr{Im} (\beta_n (D^\clubsuit)) \rangle
\end{align}
is an isomorphism of $\mcO_X$-modules.
The $\mcD$-action on $ \mcD \otimes \varTheta /  \langle \mr{Im} (\beta_n (D^\clubsuit)) \rangle$ determines, via this composite isomorphism, 
an $S^\mr{log}$-connection
\begin{align}
D^{\clubsuit \Rightarrow \diamondsuit} : \mcF_{\varTheta, n} \rightarrow \Omega \otimes \mcF_{\varTheta, n}
\end{align}
on $\mcF_{\varTheta, n}$.
One may verify that the collection of data
\begin{align}
D^{\clubsuit \Rightarrow \heartsuit} := (\mcF_{\varTheta, n}, D^{\clubsuit \Rightarrow \diamondsuit}, \{ \mcF_{\varTheta, n}^j \}_{j=0}^n)
\end{align}
forms a $\mr{GL}_n$-oper on $X^\mr{log}/S^\mr{log}$.

\bde[cf. ~\cite{Wak5}, Definition 4.37, (ii)]
We say that $D^\clubsuit$ is an {\bf $(n, \vartheta)$-projective connection} on $X^\mr{log}/S^\mr{log}$ if
the associated $S^\mr{log}$-connection $D^{\clubsuit \Rightarrow \diamondsuit}$ forms a $(\mr{GL}_n, \vartheta)$-oper.
\ede

According to ~\cite[Theorem 4.41]{Wak5}, the assignment $D^\clubsuit \mapsto D^{\clubsuit \Rightarrow \diamondsuit}$ defines a bijection of sets
\begin{align} \label{EQ131}
\begin{pmatrix} \text{the set of $(n, \vartheta)$-projective} \\ \text{connections on $X^\mr{log}/S^\mr{log}$} \end{pmatrix}
\xrightarrow{\sim}
\begin{pmatrix} \text{the set  isomorphism classes of} \\ \text{$(\mr{GL}_n, \vartheta)$-opers on $X^\mr{log}/S^\mr{log}$} \end{pmatrix}.
\end{align}
The formation of this bijection  commutes with  base-changes over fs log schemes over $S^\mr{log}$, as well as  restrictions to open subschemes of $X$.

\subsection{Projective connections on a log curve II} \label{SS231}

In  this subsection,
we suppose that 
$f^\mr{log} : X^\mr{log} \rightarrow S^\mr{log}$ is strict, in the sense of ~\cite[S\,1.2]{ILL}.
Moreover, suppose that there exists a global section
$x \in H^0 (X, \mcO_X)$ such that the dual $\partial$ of $dx$ generates $\mcT$, i.e., $\mcT = \mcO_X \cdot  \partial$.
The sheaf of crystalline  differential operators admits a decomposition $\mcD = \bigoplus_{i \in \mbZ_{\geq 0}} \mcO_X \cdot \partial^i$.
Under the identification  $\mcT^{\otimes \frac{n (n-1)}{2}} \otimes \mcO_X^{\otimes n} \left(=\mcO_X \cdot \partial^{\otimes \frac{n(n-1)}{2}}\right) = \mcO_X$ given by the correspondence $ \partial^{\otimes \frac{n(n-1)}{2}} \leftrightarrow 1$, the universal derivation on $\mcO_X$ determines an $S^\mr{log}$-connection $\nabla_o$ on  $\mcT^{\otimes \frac{n (n-1)}{2}} \otimes \mcO_X^{\otimes n}$.
In particular, we obtain, for any $n$,  a dormant $n$-theta characteristic $o_n := (\mcO_X, \nabla_o)$ of $X^\mr{log}/S^\mr{log}$.
When
the integer $n$ is clear from the context, we shall simply write
  $o :=o_n$.

Now, let $D^\clubsuit$ be an $(n, o)$-projection connection on $X^\mr{log}/S^\mr{log}$ such that the corresponding $(\mr{GL}_n, o)$-oper $D^{\clubsuit \Rightarrow \diamondsuit}$ is dormant.
Under the identification $\Omega^{\otimes n} = \mcO_X$ via $\partial^{n}$,
the operator $D^\clubsuit$ can be regarded as a global section of $\mcD_{\leq n} := \bigoplus_{i=0}^{n} \mcO_X \cdot \partial^i \left(= {^\dagger}\mcD_{\leq n, \mcO_X} \cong {^\ddagger}\mcD_{\leq n, \mcO_X} \right)$.
Then, {\it the equality $\mr{det}(\mcF_{\mcO_X, n}) = \nabla_o$ is equivalent to the condition that the coefficient of $\partial^{n -1}$ in $D^\clubsuit$ vanishes} (cf. ~\cite[Remark 4.30]{Wak5}).
Recall that  $(\mcF_{\mcO_X, n}, D^{\clubsuit \Rightarrow \diamondsuit})$ is isomorphic to $\mcD/ \langle D^\clubsuit \rangle$ (equipped with the  $S^\mr{log}$-connection corresponding to the natural $\mcD$-action on $\mcD$ itself) via \eqref{EQ20}.
Since $D^{\clubsuit \Rightarrow \diamondsuit}$ has vanishing $p$-curvature, 
the element $\partial^p \left(= \partial^p - \partial^{[p]} \right)$ vanishes when considered as a global section  of $\mcD/ \langle D^\clubsuit \rangle$.
Hence,  one can find $D_\BBB^\clubsuit \in \mcD_{\leq p-n}$ satisfying the equality  $D_\BBB^\clubsuit \cdot D^\clubsuit = \partial^p$.
This equality implies that the coefficient of $\partial^{p-n-1}$ in $D_\BBB^\clubsuit$ vanishes.
By the italicized statement described above (applied with  $n$ replaced by $p-n$),
we see that $D_\BBB^\clubsuit$ forms a $(p-n, o)$-projective connection on $X^\mr{log}/S^\mr{log}$.

\ble \label{Lem54}
Let us keep the above notation and assumptions.
Then, the following assertions hold.
\begin{itemize}
\item[(i)]
The dormant $(\mr{GL}_{p-n}, o)$-oper $(D^{\clubsuit \Rightarrow \diamondsuit})^{\BBB}$ is isomorphic to $(D_\BBB^\clubsuit)^{\Rightarrow \diamondsuit}$.
\item[(ii)]
The equality $D^\clubsuit \cdot D_\BBB^\clubsuit = \partial^p$ holds.
\end{itemize}
\ele
\begin{proof}
First, we consider assertion (i).
The sequence \eqref{EQ29} applied to  $D^{\clubsuit \Rightarrow \diamondsuit}$ yields a short exact sequence of flat bundles
\begin{align} \label{EQ133}
0 \rightarrow (\mcF_{\mcO_X, p-n}, (D^{\clubsuit \Rightarrow \diamondsuit})^{\BBB}) \rightarrow (\mcF_{\mcO_X, p}, \nabla^\diamondsuit_{\mcO_X})
\rightarrow (\mcF_{\mcO_X, n}, D^{\clubsuit \Rightarrow \diamondsuit})\rightarrow 0.
\end{align}
On the other hand, the equality $D_\BBB^\clubsuit \cdot D^\clubsuit = \partial^p$
  yields 
 the following short exact sequence of flat bundles
\begin{align} \label{EQ25}
0 \rightarrow \mcD/\langle D_\BBB^\clubsuit\rangle \rightarrow \mcD/\langle \partial^p \rangle  \rightarrow \mcD/\langle D^\clubsuit \rangle \rightarrow 0,
\end{align}
where the second arrow arises from the inclusion $\langle D^\clubsuit \rangle / \langle \partial^p \rangle \hookrightarrow \mcD / \langle \partial^p \rangle$ under the natural identification $\langle D^\clubsuit \rangle / \langle \partial^p \rangle \cong \mcD/\langle D_\BBB^\clubsuit \rangle$, and the third arrow denotes the natural projection.
Observe that the  square diagram
\begin{align} \label{EQ180}
\vcenter{\xymatrix@C=46pt@R=36pt{
 (\mcF_{\mcO_X}, \nabla^\diamondsuit_{\mcO_X}) \ar[r] \ar[d]_-{\wr} & (\mcF_{\mcO_X, n}, D^{\clubsuit \Rightarrow \diamondsuit})\ar[d]^-{\wr} \\
 \mcD/ \langle \partial^p \rangle \ar[r] & \mcD/\langle D^\clubsuit \rangle
 }}
\end{align}
is commutative, where the upper and lower horizontal arrows are the third arrows in \eqref{EQ133} and \eqref{EQ25}, respectively, and the vertical arrows are given by \eqref{EQ20}.
By  \eqref{EQ133} and \eqref{EQ25}, the kernels of  the upper and lower horizontal arrows yield an isomorphism of flat bundles 
$(\mcF_{\mcO_X, p-n}, (D^{\clubsuit \Rightarrow \diamondsuit})^{\BBB}) \xrightarrow{\sim} \mcD/\langle D_\BBB^\clubsuit \rangle$.
The bijectivity  \eqref{EQ131} for $\vartheta = o$ implies 
 $(D^{\clubsuit \Rightarrow \diamondsuit})^{\BBB} \cong (D_\BBB^\clubsuit)^{\Rightarrow \diamondsuit}$, which proves assertion (i).

Next, we consider assertion (ii).
Applying the discussion preceding this lemma to $D_\BBB^\clubsuit$, we obtain an element $D_{\BBB\BBB}^\clubsuit \in H^0 (X, \mcD_{\leq n})$ with $D_{\BBB \BBB}^\clubsuit \cdot D_\BBB^\clubsuit = \partial^p$.
Observe the following chain of equalities:
\begin{align}
D_{\BBB \BBB}^\clubsuit \cdot \partial^p = D_{\BBB \BBB}^\clubsuit \cdot  (D^\clubsuit_{\BBB} \cdot D^\clubsuit) = (D_{\BBB \BBB}^\clubsuit \cdot D^\clubsuit_{\BBB}) \cdot D^\clubsuit = \partial^p \cdot D^\clubsuit  = D^\clubsuit  \cdot \partial^p,
\end{align}
where the last  equality follows from the well-known fact that $\partial^p$ lies in the center of $\mcD$.
Hence, we have $(D_{\BBB \BBB}^\clubsuit - D^\clubsuit) \cdot \partial^p =0$, which implies $D_{\BBB \BBB}^\clubsuit  = D^\clubsuit$.
It follows that $D^\clubsuit \cdot D_\BBB^\clubsuit = D_{\BBB \BBB}^\clubsuit \cdot D_\BBB^\clubsuit  = \partial^p$, thereby completing the proof of assertion (ii).
\end{proof}

The above lemma will be used in the proof of the following assertion.

\bpr \label{Lemma3}
Let us keep the above notation, and suppose that the dormant  $(\mr{GL}_{n}, o)$-oper  $D^{\clubsuit \Rightarrow \diamondsuit}$ is self-dual.
Then,  the dormant $(\mr{GL}_{p-n}, o)$-oper $(D^{\clubsuit \Rightarrow \diamondsuit})^\BBB$ is self-dual.
\epr
\begin{proof}
Just as in \eqref{EQ25}, the equality $D_\BBB^\clubsuit \cdot D^\clubsuit = \partial^p$ resulting from Lemma \ref{Lem54}, (ii),
yields a short exact sequence of flat bundles
\begin{align}
0 \rightarrow \mcD / \langle D^\clubsuit \rangle \rightarrow \mcD/\langle \partial^p \rangle \rightarrow \mcD/\langle D_\BBB^\clubsuit \rangle \rightarrow 0.
\end{align}
By passing through  the isomorphisms \eqref{EQ20} for $D^\clubsuit$, $D_\BBB^\clubsuit$, and $\partial^p$,
this sequence is transformed into  a short exact sequence 
\begin{align} \label{EQ200}
0 \rightarrow  (\mcF_{\mcO_X, n}, D^{\clubsuit \Rightarrow \diamondsuit})
\rightarrow (\mcF_{\mcO_X, p}, \nabla^\diamondsuit_{\mcO_X})
\rightarrow (\mcF_{\mcO_X, p-n}, (D^{\clubsuit \Rightarrow \diamondsuit})^\BBB)
\rightarrow 0.
\end{align}

Since  $\nabla_{\mcO_X}^\diamondsuit$  is verified to be self-dual (cf.  ~\cite[Proposition 3.7.2]{Wak2}),
we obtain a square diagram
\begin{align} \label{EQ180}
\vcenter{\xymatrix@C=46pt@R=36pt{
 (\mcF_{\mcO_X, p}, \nabla^\diamondsuit_{\mcO_X}) \ar[r] \ar[d]_-{\wr} & (\mcF_{\mcO_X, n}, D^{\clubsuit \Rightarrow \diamondsuit})\ar[d]^-{\wr} \\
  (\mcF_{\mcO_X, p}^\vee, \nabla^{\diamondsuit \vee}_{\mcO_X}) \ar[r] & 
  (\mcF_{\mcO_X, n}^\vee, (D^{\clubsuit \Rightarrow \diamondsuit})^\vee),
 }}
\end{align}
where the upper horizontal arrow denotes the third arrow in \eqref{EQ133}, the lower horizontal arrow denotes the dual of the second arrow in  \eqref{EQ200}, and the right-hand vertical arrow arises from the assumption that $D^{\clubsuit \Rightarrow \diamondsuit}$.
By ~\cite[Proposition 3.1.3]{Wak2} and ~\cite[Proposition 4.25]{Wak5}, this diagram becomes commutative after possibly replacing the right-hand (or the left-hand) vertical arrow with another.

Hence, the exactness of \eqref{EQ133} and the dual of \eqref{EQ200} together induce  an isomorphism of flat bundles 
\begin{align}
(\mcF_{\mcO_X, p-n}, (D^{\clubsuit \Rightarrow \diamondsuit})^\BBB) \xrightarrow{\sim} (\mcF_{\mcO_X, p-n}^\vee, ((D^{\clubsuit \Rightarrow \diamondsuit})^\BBB)^\vee).
\end{align}
It defines an isomorphism of $(\mr{GL}_{p-n}, o)$-opers $(D^{\clubsuit \Rightarrow \diamondsuit})^\BBB \xrightarrow{\sim} (D^{\clubsuit \Rightarrow \diamondsuit})^{\BBB\BB}$.
This fact implies, under the natural  identification  $\msN_{o} = (\mcO_X, d)$ using the generator $\partial^{p-n-1} \in \mcN_{\mcO_X} \left(= \mcT^{\otimes (p-n-1)} \otimes \mcO_X^{\otimes 2}\right)$,
that  $(D^{\clubsuit \Rightarrow \diamondsuit})^\BBB$ is self-dual.
This completes the proof of this proposition.
\end{proof}

\subsection{Duality between   dormant $\mfs \mfo_{2\ell +1}$- and $\mfs \mfp_{2m}$-opers}\label{SS433}

Let $S$ be a $k$-scheme and $\msX := (f: X \rightarrow S, \{ \sigma_i \}_{i=1}^r)$ an $r$-pointed stable curve of genus $g$ over $S$.
In particular,  $\msX$ associates a log curve $f^\mr{log} : X^\mr{log} \rightarrow S^\mr{log}$.
As before, we fix a dormant $n$-theta characteristic $\vartheta := (\varTheta, \nabla_\vartheta)$ of $X^\mr{log}/S^\mr{log}$ with $1 \leq n < p$.

\bpr \label{Prop458}\begin{itemize}
\item[(i)]
Let $\nabla^\diamondsuit$ be a  self-dual dormant $(\mr{GL}_n, \vartheta)$-oper on $X^\mr{log}/S^\mr{log}$.
Then, the associated dormant $(\mr{GL}_{p-n}, \vartheta^\BBB)$-oper
$\nabla^{\diamondsuit \BBB}$ (cf. \eqref{EQ209}) is self-dual.
Moreover, the assignment $\nabla^\diamondsuit \mapsto \nabla^{\diamondsuit \BBB}$ defines 
a bijection of sets
\begin{align} \label{EQ30}
\begin{pmatrix} \text{the set of isomorphism classes of} \\ \text{self-dual dormant $(\mr{GL}_n, \vartheta)$-opers} \\ \text{on $\msX$}\end{pmatrix}
\xrightarrow{\sim}
\begin{pmatrix} \text{the  set of isomorphism classes of} \\ \text{self-dual dormant $(\mr{GL}_{p-n}, \vartheta^\BBB)$-opers} \\ \text{on $\msX$}\end{pmatrix}.
\end{align}
\item[(ii)]
Let $e := (e_i)_{i=1}^r$ be an $r$-tuple of elements in $\widetilde{\Xi}_{n, \mr{sym}}$.
Then, the bijection   \eqref{EQ30} restricts to a bijection
\begin{align} \label{EQ31}
\begin{pmatrix} \text{the set of isomorphism classes of} \\ \text{self-dual dormant $(\mr{GL}_n, \vartheta)$-opers} \\ \text{of exponents $e$ on $\msX$}\end{pmatrix}
\xrightarrow{\sim}
\begin{pmatrix} \text{the set of isomorphism classes of} \\ \text{self-dual dormant $(\mr{GL}_{p-n}, \vartheta^\BBB)$-opers} \\ \text{of exponents  $e^\BBB$ on $\msX$}\end{pmatrix}.
\end{align}
\end{itemize}
\epr
\begin{proof}
First, we consider assertion (i).
Let us take 
 a scheme-theoretic dense open subscheme  $U$ of $X \setminus \bigcup_{i=1}^r \mr{Im} (\sigma_i)$.
 Then,  the associated morphism 
$U^\mr{log} := U \times_X X^\mr{log} \rightarrow S^\mr{log}$  is strict.
Denote by $\mcO p_{X^\mr{log}/S^\mr{log}}$ (resp., $\mcO p_{U^\mr{log}/S^\mr{log}}$) the \'{e}tale sheaf on $X$ (resp., $U$) that  assigns, to any \'{e}tale scheme $Y$ over $X$ (resp., $U$), the set of isomorphism classes of $(\mr{GL}_n, \vartheta |_Y)$-opers on $(Y \times_X X^\mr{log})/S^\mr{log}$ (resp., $(Y \times_U U^\mr{log})/S^\mr{log}$).
It follows from ~\cite[Theorem 4.49]{Wak5} that 
$\mcO p_{X^\mr{log}/S^\mr{log}}$ and  $\mcO p_{U^\mr{log}/S^\mr{log}}$ carry  torsor structures modeled on ${^\dagger}\mcD_{\leq n -2, \varTheta}$ and ${^\dagger}\mcD_{\leq n -2, \varTheta}|_U$, respectively.
The  morphism of sheaves 
\begin{align} \label{EQ2109}
\mcO p_{U^\mr{log}/S^\mr{log}} \rightarrow \iota_* (\mcO p_{X^\mr{log}/S^\mr{log}}),
\end{align}
where $\iota$ denotes the open immersion $U \hookrightarrow X$,
 obtained via restriction 
is compatible with the  natural morphism ${^\dagger}\mcD_{\leq n -2, \varTheta}|_U \rightarrow \iota_* ({^\dagger}\mcD_{\leq n -2, \varTheta})$.
Since $U$ is scheme-theoretic dense in $X$, the morphism \eqref{EQ2109}   is injective.
Hence, to complete the proof, it suffices to establish  
 the self-duality of $\nabla^{\diamondsuit \BBB}$ after  restriction to $U$.
 In particular, 
we can impose the assumptions in Section \ref{SS231} without loss of generality.
The surjectivity of \eqref{EQ131} implies that  there exists an $(n, \vartheta)$-projective connection $D^{\clubsuit}$ on $U^\mr{log}/S^\mr{log}$ with
$D^{\clubsuit \Rightarrow \diamondsuit} \cong \nabla^\diamondsuit |_U$.
Since $\nabla^\diamondsuit |_U$ is self-dual by assumption, it follows from Proposition \ref{Lemma3}
that  $\nabla^{\diamondsuit \BBB} |_U \left( \cong (\nabla^\diamondsuit |_U)^\BBB \cong (D^{\clubsuit \Rightarrow \diamondsuit})^\BBB\right)$ is self-dual.
This completes the proof of assertion (i).

Assertion (ii) follows from assertion (i) and the bijectivity of \eqref{EQ116}.
\end{proof}

The above assertion immediately yields  the following two assertions.

\bt \label{Prop481e}
Let $\ell$, $m$ be positive integers with $p - 1  = 2 (\ell + m)$.
Then, there exists  a bijection 
\begin{align} \label{EQ234e}
\begin{pmatrix} \text{the set of isomorphism classes of} \\ \text{dormant $(\mr{GO}_{2\ell +1}, \vartheta)$-opers on $\msX$}\end{pmatrix}
\xrightarrow{\sim}
\begin{pmatrix} \text{the set of isomorphism classes of} \\ \text{dormant $(\mr{GSp}_{2m}, \vartheta^\BBB)$-opers on $\msX$}\end{pmatrix}.
\end{align}
If, moreover, $e$ is an $r$-tuple of elements in $\widetilde{\Xi}_{n, \mr{sym}}$ (where we set $e := \emptyset$ when $r =0$),
then this bijection restricts to a bijection 
\begin{align} \label{EQ235e}
\begin{pmatrix} \text{the set of isomorphism classes of} \\ \text{dormant $(\mr{GO}_{2\ell +1}, \vartheta)$-opers} \\ \text{of exponents  $e$ on $\msX$}\end{pmatrix}
\xrightarrow{\sim}
\begin{pmatrix} \text{the set of isomorphism classes of} \\ \text{dormant $(\mr{GSp}_{2m}, \vartheta^\BBB)$-opers} \\ \text{of exponents $e^\BBB$ on $\msX$}\end{pmatrix}.
\end{align}
Finally, the formations of  these bijections commute with base-changes over $S$-schemes.
\et
\begin{proof}
The bijection asserted in (i) can be obtained by composing the bijections
  \eqref{EQ2} and  \eqref{EQ30}.
Also, 
the bijection in (ii) follows  from this  bijection  together with
Proposition \ref{Prop458}, (ii).
\end{proof}

\bt\label{Prop460e}
There exists  a unique dormant $(\mr{GSp}_{p-1}, \vartheta)$-oper on $\msX$, and 
its exponent  at every marked point coincides with   $e_i (\nabla_\vartheta)^\BBB$.
In particular,  if  $r =0$,  $X$ is a geometrically connected, proper, and smooth curve over $S$, and $\vartheta = \vartheta_\mr{Ray}$ (cf. \eqref{EQ281}), then  this dormant $(\mr{GSp}_{p-1}, \vartheta)$-oper is isomorphic to $\nabla_{\ang, \mr{Ray}}^{\diamondsuit}$ (cf. \eqref{EQ63}).
\et
\begin{proof}
The assertion follows from  Proposition \ref{Prop458} for $n=1$ together with the fact that  there exists a unique (up to isomorphism)  dormant $(\mr{GL}_1, \vartheta)$-oper, i.e., $(\mcF_{\varTheta, 1}, \nabla_\vartheta)$.
\end{proof}

Moreover, by  the bijection  \eqref{EQ5},   
Theorems \eqref{Prop481e} and \eqref{Prop460e} yield  the following Theorems  \ref{Prop461} and \ref{Prop460}, respectively.

\bt \label{Prop461}
Let $\ell$, $m$ be positive integers with $p - 1  = 2 (\ell + m)$.
Then, there exists  a bijection 
\begin{align} \label{EQ234}
\begin{pmatrix} \text{the set of isomorphism classes of} \\ \text{dormant $\mfs \mfo_{2\ell +1}$-opers on $\msX$}\end{pmatrix}
\xrightarrow{\sim}
\begin{pmatrix} \text{the set of isomorphism classes of} \\ \text{dormant $\mfs \mfp_{2m}$-opers on $\msX$}\end{pmatrix}.
\end{align}
If, moreover, $\rho$ is an $r$-tuple of elements in $\Xi_{n, \mr{sym}}$ (where we set  $\rho := \emptyset$ when $r =0$),
then this bijection restricts to a bijection 
\begin{align} \label{EQ235}
\begin{pmatrix} \text{the set of isomorphism classes of} \\ \text{dormant $\mfs \mfo_{2\ell +1}$-opers of radii $\rho$ on $\msX$}\end{pmatrix}
\xrightarrow{\sim}
\begin{pmatrix} \text{the set of isomorphism classes of} \\ \text{dormant $\mfs \mfp_{2m}$-opers of radii $\rho^\BBB$ on $\msX$}\end{pmatrix}.
\end{align}
Finally, the formations of  these bijections commute with base-changes over $S$-schemes.
\et

\bt\label{Prop460}
There exists  a unique dormant $\mfs \mfp_{p-1}$-oper on $\msX$, and 
its radius at every marked point coincides with  $\rho_\mr{full} := \pi (\mbF_p)$ (cf. \eqref{EQ301}).
In particular,  when $r =0$ and $X$ is a geometrically connected,  proper, and  smooth curve over $S$, this dormant $\mfs \mfp_{p-1}$-oper  is isomorphic to $\nabla_{\ang, \mr{Ray}}^{\diamondsuit \Rightarrow \spadesuit}$  (cf. \eqref{EQ63}).
\et

\subsection{Moduli spaces of dormant opers}\label{SS155}

We set  $\mfg := \mfs \mfl_n$  (resp., $\mfg := \mfs \mfo_{2\ell +1}$; resp., $\mfg := \mfs \mfp_{2m}$) for some integer $n$ (resp., $\ell$; resp., $m$) with $1 < n < p$  (resp., $1 < 2\ell +1 < p$; resp., $1 < 2m < p$),
 and write 
 \begin{align} \label{EQ300}
 \Xi_{\mfg} := \Xi_n \  \left(\text{resp.,} \  \Xi_{\mfg} := \Xi_{2\ell +1, \mr{sym}}; \text{resp.,}  \ \Xi_{\mfg} := \Xi_{2m, \mr{sym}}\right).
 \end{align}

We denote by 
\begin{align} \label{EQ27}
\mcO p^{^\mr{Zzz...}}_{\mfg, g, r}
\end{align}
the category consisting of   pairs $(\msX, \msE^\spadesuit)$, where $\msX$ denotes an $r$-pointed stable curve  of genus $g$ over a $k$-scheme   and $\msE^\spadesuit$ denotes  a dormant $\mfg$-oper   on $\msX$.
The assignment from such a pair $(\msX, \msE^\spadesuit)$ to the base scheme of $\msX$ defines a functor $\mcO p^{^\mr{Zzz...}}_{\mfg, g, r} \rightarrow \mcS ch_k$,
where $\mcS ch_k$ denotes the category of $k$-schemes (equipped with the big \'{e}tale topology).
With respect to  this functor, 
$\mcO p^{^\mr{Zzz...}}_{\mfg, g, r}$ forms a stack over $\mcS ch_k$.

Moreover,  for each $r$-tuple 
 $\rho := (\rho_i)_{i=1}^r$ of   elements in  $\Xi_{\mfg}$ (where $\rho := \emptyset$ if  $r =0$),
 we obtain a (possibly empty) substack
 \begin{align} \label{EQ28}
 \mcO p^{^\mr{Zzz...}}_{\mfg, \rho, g, r}
 \end{align}
 of $\mcO p^{^\mr{Zzz...}}_{\mfg, g, r}$ classifying dormant $\mfg$-opers of radii $\rho$.
 We know that  $\mcO p^{^\mr{Zzz...}}_{\mfg, g, r}$ decomposes as the disjoint union $\mcO p^{^\mr{Zzz...}}_{\mfg, g, r} = \coprod_{\rho \in \Xi_{\mfg}^r}  \mcO p^{^\mr{Zzz...}}_{\mfg, \rho, g, r}$.
The assignment $(\msX, \msE^\spadesuit) \mapsto \msX$ defines a morphism of stacks
 $\Pi_{\mfg, \rho, g, r} : \mcO p^{^\mr{Zzz...}}_{\mfg,  \rho, g, r} \rightarrow \overline{\mcM}_{g, r}$.

According to  ~\cite[Theorem 4.66 and Proposition 4.71]{Wak5},  $\mcO p_{\mfs \mfl_n, \rho, g, r}^{^\mr{Zzz...}}$ can be represented by a Deligne-Mumford stack over $k$.
It follows from  ~\cite[Theorems 3.33, (i), and 8.19]{Wak5} together with ~\cite[Theorems A and B]{Wak5} that
$\Pi_{\mfs \mfl_n, \rho, g, r}$ is finite and generically \'{e}tale.
By  the functorial bijection \eqref{EQ110}, the assignment $\nabla^\diamondsuit \mapsto \nabla^{\diamondsuit \BBB}$ defines, via \eqref{EQ33}, an isomorphism of $\overline{\mcM}_{g, r}$-stacks
\begin{align} \label{EQ229}
\mcO p^{^\mr{Zzz...}}_{\mfs \mfl_n, \rho, g, r} \xrightarrow{\sim}\mcO p^{^\mr{Zzz...}}_{\mfs \mfl_{p-n}, \rho^\BBB, g, r}. 
\end{align}
Composing this with the isomorphism $\mcO p_{\mfs \mfl_{p-n}, \rho^\BBB, g, r}^{^\mr{Zzz...}} \xrightarrow{\sim} \mcO p_{\mfs \mfl_{p-n}, \rho^\BBBB, g, r}^{^\mr{Zzz...}}$ induced by \eqref{EQ601}, we obtain an isomorphism of $\overline{\mcM}_{g, r}$-stacks
\begin{align}\label{EQ604}
\Dual_{n, \rho, g, r} : \mcO p_{\mfs \mfl_n, \rho, g, r}^{^\mr{Zzz...}} \xrightarrow{\sim} \mcO p_{\mfs \mfl_{p-n}, \rho^\BBBB, g, r}^{^\mr{Zzz...}}.
\end{align}
This isomorphism  satisfies
$\Dual_{p-n, \rho^\BBBB, g, r} \circ  \Dual_{n, \rho, g, r} = \mr{id}$ (cf. ~\cite[Theorem A]{Wak2}).

In the case $n= p-1$,  the projection $\Pi_{\mfs \mfl_{p-1}, \rho_\mr{full}^r, g, r}$ for  $\rho_\mr{full}^r := (\rho_\mr{full}, \cdots, \rho_\mr{full}) \in \Xi_{\mfs \mfl_{p-1}}^r$  defines an isomorphism
\begin{align}
\left(\mcO p^{^\mr{Zzz...}}_{\mfs \mfl_{p-1}, g, r} =\right) \mcO p^{^\mr{Zzz...}}_{\mfs \mfl_{p-1}, \rho_\mr{full}^r, g, r} \xrightarrow{\sim}\overline{\mcM}_{g, r}
\end{align}
 (cf. ~\cite[Theorem B]{Wak2}).

Next, under the assumption that $n = 2\ell +1$ (resp., $n=2m$),  it follows from Corollary \ref{Prop89} that 
the stack  $\mcO p_{\mfs \mfo_{2\ell +1}, \rho, g, r}^{^\mr{Zzz...}}$ (resp., $\mcO p_{\mfs \mfp_{2m}, \rho, g, r}^{^\mr{Zzz...}}$) for $\rho \in \Xi_{\mfs \mfo_{2\ell +1}}^r \subseteq \Xi_{\mfs \mfl_n}^r$ (resp., $\rho \in \Xi_{\mfs \mfp_{2m}}^r\subseteq \Xi_{\mfs \mfl_n}^r$)  can be realized as a closed substack of $\mcO p_{\mfs \mfl_n, \rho, g, r}^{^\mr{Zzz...}}$.
In particular,  $\mcO p_{\mfs \mfo_{2\ell +1}, \rho, g, r}^{^\mr{Zzz...}}$ (resp., $\mcO p_{\mfs \mfp_{2m}, \rho, g, r}^{^\mr{Zzz...}}$) are finite over $\overline{\mcM}_{g, r}$.
We are thus led to  the following theorem, which constitutes  the main result of this paper.

\bt[cf. Theorem \ref{ThmA}] \label{QW1199}
\begin{itemize}
\item[(i)]
Suppose that  $p > n = 2\ell +1$ (resp., $p-1> n=2m$).
Let 
 $\rho$ be an element of $\Xi_{\mfs \mfo_{2\ell +1}}^r$ (resp., $\Xi_{\mfs \mfp_{2m}}^r$), which implies  that $\rho^\BBBB$ lies in $\Xi_{\mfs \mfp_{2m}}^r$ (resp.,  $\Xi_{\mfs \mfo_{2\ell +1}}^r$).
Then, 
the isomorphism \eqref{EQ604} restricts to an
 isomorphism of $\overline{\mcM}_{g, r}$-stacks
 \begin{align} \label{EQ610}
 \Dual^\ang_{n, \rho, g, r} : \mcO p_{\mfs \mfo_{n} \rho, g, r}^{^\mr{Zzz...}} \xrightarrow{\sim} 
 \mcO p_{\mfs \mfp_{p-n}, \rho^\BBBB, g, r}^{^\mr{Zzz...}} \ \left(\text{resp.,} \  \Dual^\ang_{n, \rho, g, r} : \mcO p_{\mfs \mfp_{n} \rho, g, r}^{^\mr{Zzz...}} \xrightarrow{\sim} 
 \mcO p_{\mfs \mfo_{p-n}, \rho^\BBBB, g, r}^{^\mr{Zzz...}} \right).
 \end{align}
In particular, the equality $\Dual_{p-n, \rho^\BBBB, g, r}^\ang \circ \Dual^\ang_{n, \rho, g, r} = \mr{id}$ holds.
 \item[(ii)]
 The projection $\Pi_{\mfs \mfo_{p-1}, \rho_\mr{full}^r, g, r}$ defines an isomorphism
 \begin{align} \label{EQ284}
 \left(\mcO p^{^\mr{Zzz...}}_{\mfs \mfp_{p-1}, g, r} =\right) \mcO p^{^\mr{Zzz...}}_{\mfs \mfp_{p-1}, \rho_\mr{full}^r, g, r} \xrightarrow{\sim}\overline{\mcM}_{g, r}.
 \end{align}
 \end{itemize}
\et
\begin{proof}
Assertions (i) and (ii) follow from Theorems \ref{Prop461} and  \ref{Prop460}, respectively.
\end{proof}

\bco[cf. Theorem \ref{ThmB}, (i)] \label{Cor21}
Suppose that $\mfg$ is either $\mfs \mfo_{2\ell +1}$ with $1 \leq \ell \leq \frac{p-3}{2}$, or $\mfs \mfp_{2m}$ with $1 \leq m \leq \frac{p-3}{2}$.
For each  $\rho := (\rho_i)_{i=1}^r \in \Xi_\mfg^r$ (where  we set $\rho := \emptyset$ if $r =0$),
the natural projection $\Pi_{\mfg, \rho, g, r}$ is generically \'{e}tale.
\eco
\begin{proof}
The assertion for   $\mfg = \mfs \mfo_{2\ell +1}$ with $4\ell +2 < p$ and $\mfg = \mfs \mfp_{2m}$ with  $4m <p$ was established in  ~\cite[Theorem 8.19]{Wak5}.
The remaining cases are reduced to these situations by means of  Theorem \ref{QW1199}, (i).
\end{proof}

\subsection{Factorization property of the number of dormant opers} \label{SS7}

We conclude this paper by describing factorization properties,   with respect to  the data of radii, of the generic degrees $\mr{deg}(\Pi_{\mfg, \rho,  g, r})$ of  $\Pi_{\mfg, \rho, g, r}$ for  $\mfg = \mfs \mfo_{2\ell +1}, \mfs \mfp_{2m}$ in higher-rank cases
  (cf. ~\cite[Chap.\,7]{Wak5} for the previous study of related topics).
These factorization phenomena are naturally governed by
pseudo-fusion rules,
and may be viewed as higher-rank extensions of the results obtained in
earlier work (cf.~\cite[Section 7]{Wak5}).

To begin with, let us consider the case 
$\mfg = \mfs \mfo_{2\ell +1}$ for some positive integer $\ell$ with $1 \leq \ell \leq \frac{p-3}{2}$.
Assume first that   $4 \ell + 2 < p$.
Since each element of $\Xi_{\mfs \mfo_{2\ell +1}}$ is invariant under $(-)^\BB$,
it follows from ~\cite[Theorem G]{Wak5} that  $\mfs \mfo_{2\ell +1}$ satisfies 
both  conditions $(*)$ and $(**)$ introduced  at the beginning of ~\cite[Section 7.3.5]{Wak5}.
Thus, by  ~\cite[Proposition 7.33]{Wak5} together with  the discussion in 
~\cite[Section 7.4]{Wak5},
one can obtain  the {\it pseudo-fusion ring}  for dormant $\mfs \mfo_{2\ell +1}$-opers 
\begin{align}
\Fus_{\mfs \mfo_{2\ell +1}},
\end{align}
 in the sense of ~\cite[Definition 7.34]{Wak5}.
To be precise, $\Fus_{\mfs \mfo_{2\ell +1}}$ is defined as the unitization of the free abelian group $\mbZ^{\Xi_{\mfs \mfo_{2\ell +1}}}$ with basis $\Xi_{\mfs \mfo_{2\ell +1}}$ equipped with the multiplication $\ast : \mbZ^{\Xi_{\mfs \mfo_{2\ell +1}}} \times \mbZ^{\Xi_{\mfs \mfo_{2\ell +1}}} \rightarrow \mbZ^{\Xi_{\mfs \mfo_{2\ell +1}}}$ given by
\begin{align} \label{EQ49}
\alpha \ast \beta = \sum_{\lambda \in \Xi_{\mfs \mfo_{2\ell +1}}} \mr{deg}(\Pi_{\mfs \mfo_{2\ell +1}, (\alpha, \beta, \lambda), 0, 3}) \cdot \lambda.
\end{align}

On the other hand, assume  that $4\ell +2 >p$.
When we set   $m := \frac{p-1-2\ell}{2}$ $\left(\Longleftrightarrow p-1 = 2 (\ell +m) \right)$,
 this assumption is equivalent  to the inequality
 $4m < p$.
 Hence, we can 
apply again  the same results in ~\cite{Wak5} as above, and  thereby obtain the pseudo-fusion ring for dormant $\mfs \mfp_{2m}$-opers $\Fus_{\mfs \mfp_{2m}}$.
Under the identification $\Xi_{\mfs \mfo_{2\ell +1}} = \Xi_{\mfs \mfp_{2m}}$ given by the involution $(-)^\BBB$,
the multiplication structure on  $\Fus_{\mfs \mfp_{2m}}$ is transposed to 
 $\mbZ^{\Xi_{\mfs \mfo_{2\ell +1}}}$;
we denote  the resulting ring, whose underlying abelian group is $\mbZ^{\Xi_{\mfs \mfo_{2\ell +1}}}$,   by
$\Fus_{\mfs \mfo_{2\ell +1}}$.
Since  $\mr{deg}(\Pi_{\mfs \mfo_{2\ell +1}, \rho, g, r}) = \mr{deg} (\Pi_{\mfs \mfp_{2m}, \rho^\BBB, g, r})$ for every collection $(\rho, g, r)$,
the multiplication in $\Fus_{\mfs \mfo_{2\ell +1}}$ is given by the same formula  as in  \eqref{EQ49}.
In particular, 
 even in the case  $4\ell +2 >p$,
we can apply the discussion of  ~\cite[Section 7]{Wak5} to this  ring $\Fus_{\mfs \mfo_{2\ell +1}}$.
The explicit description  of its ring structure enables us to compute 
 the values $\mr{deg}(\Pi_{2\ell +1, \rho, g, r})$.
 
 Furthermore, by reversing the roles of $\mfs \mfo_{2\ell +1}$ and $\mfs \mfp_{2m}$ and reapplying the above argument, we  also obtain 
  the  pseudo-fusion ring  for dormant $\mfs \mfp_{2m}$-opers 
\begin{align}
\Fus_{\mfs \mfo_{2m}}
\end{align}
 for any positive integer $m$ with $1 \leq m \leq \frac{p-3}{2}$.
Thus, we obtain  the following  assertion.

\SSP
\bt[cf.  Theorem \ref{ThmB}, (ii)] \label{Thm15}
Suppose that $\mfg$ is either $\mfs \mfo_{2\ell +1}$ with $1 \leq \ell \leq \frac{p-3}{2}$ or $\mfs \mfp_{2m}$ with $1 \leq m \leq \frac{p-3}{2}$.
Write $\mfS$ for the set of ring homomorphisms $\Fus_\mfg \rightarrow \mbC$ and write $\mr{Cas} := \sum_{\lambda \in \Xi_{\mfg}} \lambda \ast \lambda \left(\in \Fus_\mfg \right)$.
Then, for each $\rho := (\rho_i)_{i=1}^r \in \Xi_\mfg^{\times r}$,
the following equality holds:
\begin{align}
\mr{deg}(\Pi_{\mfg, \rho, g, r}) = \sum_{\chi \in \mfS} \chi (\mr{Cas})^{g-1} \cdot \prod_{i=1}^r \chi (\rho_i).
\end{align}
In particular, if $r = 0$ (which implies $g > 1$), then this equality reads
\begin{align}
\mr{deg}(\Pi_{\mfg,  \emptyset, g, 0}) = \sum_{\chi \in \mfS} \chi (\mr{Cas})^{g-1}.
\end{align}
\et
\begin{proof}
The assertion follows from ~\cite[Theorem 7.36, (ii)]{Wak5}.
\end{proof}

\subsection*{Acknowledgements}
We are grateful for the many constructive conversations we had with
the moduli spaces of dormant $\mfs \mfo_{2\ell +1}$- and $\mfs \mfp_{2m}$-opers, who live in the world of mathematics!
The author was partially supported by 
 JSPS KAKENHI Grant Number 25K06933.


\end{document}